\documentclass{amsart}

\makeatletter
\def\BState{\State\hskip-\ALG@thistlm}
\makeatother

\usepackage{color}
\usepackage[normalem]{ulem}
\usepackage{enumitem}

\usepackage{amsthm}
\usepackage{amsmath}
\usepackage{mathtools}
\usepackage{amssymb}
\usepackage{commath}
\usepackage{bigints}
\usepackage{systeme}

\usepackage{soul}

\newtheorem{thm}{Theorem}

\newtheorem{remark}{Remark}

\newtheorem{lemma}{Lemma}[section]

\newcommand{\I}{\mathcal{D}}

\newcommand{\Th}{\mathcal{T}_h} 
\newcommand{\Vh}{\mathbb{V}_h} 
\newcommand{\Vn}{\mathbb{V}_h^n} 
\newcommand{\Wn}{\mathbb{W}_h^n} 
\newcommand{\Hr}{\mathbb{H}^r(\Omega)}
\newcommand{\Hrr}{\mathbb{H}^1(\Omega)}
\newcommand{\Hrp}{\mathbb{H}^{r+1}(\Omega)}
\newcommand{\Hrse}{\mathbb{H}^{r-2s+\varepsilon}(\Omega)}
\newcommand{\Ha}{\mathbb{H}^{r+2\alpha^*-2s}(\Omega)}
\newcommand{\Hae}{\mathbb{H}^{r+2\alpha^*-2s+2\varepsilon}(\Omega)}

\DeclareMathOperator{\Span}{span}
\DeclareMathOperator{\argmax}{argmax}

\title{Reduced basis approximations of the solutions to spectral fractional diffusion problems}
\author{Andrea Bonito, Diane Guignard, Ashley R. Zhang}
\address{\{AB, DG, AZ\}, Dept. of Mathematics, Texas A$\&$M University,  College Station, TX-77843, \{bonito,dguignard,ashleyrzhang\}@math.tamu.edu}

\date{\today}

\keywords{fractional diffusion, Dunford-Taylor integral, reduced basis method, sinc quadrature, finite element method}
\subjclass[2010]{ 65N30, 35S15, 65N15, 65N12}

\thanks{%
 This research was supported by the NSF grant DMS-1817691 (AB-DG-AZ);  DG was supported by the Swiss National Science Foundation grant P2ELP2-175056 and IAMCS at TAMU}
\begin{document}
	
\maketitle

\begin{abstract}
We consider the numerical approximation of the spectral fractional diffusion problem based on the so called Balakrishnan representation. The latter consists of an improper integral approximated via quadratures. At each quadrature point, a reaction-diffusion problem must be approximated and is the method bottle neck. In this work, we propose to reduce the computational cost using a reduced basis strategy allowing for a fast evaluation of the reaction-diffusion problems. The reduced basis does not depend on the fractional power $s$ for $0<s_{\min}\leq s \leq s_{\max}<1$. It is built \emph{offline} once for all and used \emph{online} irrespectively of the fractional power. We analyze the reduced basis strategy and show its exponential convergence. The analytical results are illustrated with insightful numerical experiments.
\end{abstract}
	
\section{Introduction}
	
Nonlocal models have recently received a great attention due to their apparent ability to capture novel effects such as in mechanics \cite{teodor2014fractional} and in particular in peridynamics \cite{silling2000,ha2010studies}, turbulence \cite{chen2006speculative}, biophysics \cite{bueno2014fractional} and image denoising \cite{gatto2015numerical}, to mention a few.

In most of the applications, the type of nonlocal interactions are different and their scaling laws are unknown. Initiated by the work in \cite{AO18}, an algorithm is proposed and analyzed in \cite{antil2016optimization} to  identify the fractional power $s \in [s_{\min},s_{\max}]$ governing the state equation in an optimization framework. As expected, the algorithm exploits the smoothness of the map $s \mapsto (-\Delta)^{-s} f$ and requires many (costly) evaluations of  $(-\Delta)^{-s} f$ for different $s \in (0,1)$. Several numerical methods are available to approximate $(-\Delta)^{-s} f$ and we refer to the surveys \cite{BBNOS18,lischke2018fractional} for  the description of different fractional Laplacians along with their numerical approximations. Here, for $f \in L^2(\Omega)$ and $\Omega$ a Lipschitz domain of $\mathbb R^d$, $d=1,2,3$, we set
\begin{equation} \label{def:prob}
u(s):=(-\Delta)^{-s} f:=\sum_{k=1}^{\infty}\lambda_k^{-s} f_k\psi_k,
\end{equation}
where $\{\lambda_k,\psi_k\}_{k\in\mathbb{N}} \subset  \mathbb{R}^+ \times  H^1_0(\Omega)$ are the eigenpairs of $(-\Delta)$ and $f_k := \int_\Omega f \psi_k$. The eigenfunctions $\{\psi_k\}_{k\in \mathbb N}$ are chosen orthogonal in $H_0^1(\Omega)$ and orthonormal in $L^2(\Omega)$. In \eqref{def:prob} the fractional operator is referred to as the spectral fractional Laplacian and is the one considered in this work. It is worth mentioning that the methodology proposed here is not limited to the Laplacian operator and can be easily extended to regularly accretive operators as in \cite{BP17}.
	
In this work, we follow the approach proposed in \cite{BP15} to approximate \eqref{def:prob}, see also \cite{BP17}, which is based on the Dunford-Taylor-Balakrishnan representation 
\begin{equation*}
u(s)=\frac{\sin(s\pi)}{\pi}\int_{-\infty}^{\infty}e^{(1-s)y} w(y)dy,
\end{equation*}
where $w(y) \in H^1_0(\Omega)$ solves
\begin{equation}\label{eq:wy}
(e^y I -\Delta)w(y) = f.
\end{equation}
Originally introduced in \cite{BP15} and later improved in \cite{BLP18}, a sinc quadrature coupled with a standard finite element method is used for the approximation of the integration in the variable $y$. For $k>0$, it reads
\begin{equation} \label{def:uk}
u(s) \approx u_{h,k}(s)=\frac{k\sin(s\pi)}{\pi}\sum_{l=-M_s}^{N_s}e^{(1-s)y_l}w_h(y_l)
\end{equation}
with $y_l:=lk$,
\begin{equation} \label{def:NsMs}
M_s:=\left\lceil \frac{\pi^2}{(1-s)k^2} \right\rceil, \quad N_s:=\left\lceil \frac{\pi^2}{sk^2} \right\rceil
\end{equation}
and where $w_h(y_l)\in  \mathbb V_h$ are standard finite element approximations of $w(y_l)$.
	
The numerical approximation of $(-\Delta)^{-s} f$ requires $M_s+N_s+1$ finite element solves to determine $w_h(y_l)$, $y_l\in[-M_sk,N_sk]$. This can become prohibitive when the computation of $(-\Delta)^{-s} f$ is needed for many values of $s$ such as within an optimization loop as mentioned above. The reduced basis method seems to be a natural approach to reduce the computational cost when approximating the parametrized reaction-diffusion problems \eqref{eq:wy}. In fact, reduced basis method for this type of one dimensional parametric elliptic partial differential equation has already been partially analyzed in \cite{maday2002priori,MPT02} and recently in \cite{DS19} from which part of our analysis is inspired.
	
A (\emph{weak}) greedy strategy is advocated (\emph{offline stage}) to iteratively select snapshots $w_h(y^l)$, \mbox{$l=1,...,n <\!\!< \dim(\mathbb V_h)$}, defining the $s$-independent reduced basis space $\Vn := \Span\{w_h(y^1),\ldots,w_h(y^n)\}$. Galerkin approximations $w_h^n(y_l) \in \Vn$ of $w_h(y_l)$ can then be easily computed (\emph{online stage}) to produce a reduced basis approximation of $u_{h,k}(s)$ 
\begin{equation}\label{def:ukn}
u_{h,k}^n(s) := \frac{k\sin(s\pi)}{\pi}\sum_{l=-M_s}^{N_s}e^{(1-s)y_l}w_h^n(y_l).
\end{equation}
We point out that one of the difficulty faced in this study is that the approximation of the parametric elliptic partial differential equation~\eqref{eq:wy} is required for $y$ in the parametric domain $[-M_sk,N_sk]$ whose length increases as the sinc quadrature parameter $k$ decreases (improving the precision of the algorithm). 

The proposed algorithm provides an approximation of the entire map $s \mapsto u(s)$,  $s\in[s_{\min},s_{\max}]$ using the same reduced basis space $\Vn$. Our main result is Theorem~\ref{cor:err_D} which guarantees an exponential convergence of the reduced basis approximation $u_{h,k}^n(s)$ toward $u_{h,k}(s)$ in a wide range of Sobolev norms, uniformly in the fractional power $s\in[s_{\min},s_{\max}]$.

We end this introduction by noting that the idea of using the reduced basis method for fractional problems has been recently proposed for instance in \cite{DS19} and \cite{DACCN19}. In \cite{DS19}, the reduced basis is used for the approximation of interpolation norms, as well as evaluations of both types $s \mapsto (-\Delta)^s u$ with $u$ fixed and variable $s\in(0,1)$ and $u \mapsto (-\Delta)^s u$. A reduced basis space based on best rational approximations for a reaction diffusion problem similar to the one satisfied by $w_{h}$ is proposed and exponential convergence of the approximation with respect to the dimension of the reduced basis space is obtained. Worth mentioning, the numerical method is based on the extension method \cite{NOS15} but seemingly apply to other approximation techniques. Actually, this method boils down to the approximation of several reaction diffusion problems as in \cite{BP15}. We take advantage of the technology developed in \cite{DS19} to derive an exponential decay in the approximation of \eqref{def:uk} by \eqref{def:ukn}. In \cite{DACCN19}, a similar approximation $u_{h,k}(s)$ is proposed for a different quadrature. Exponential decay of the reduced basis error is observed numerically but without analysis. In some sense, this work provides a mathematical justifications of the experimental observations in \cite{DACCN19}. Finally, we mention that the reduced basis method has also been used in \cite{BG19} to approximate the parametric PDEs $(-\Delta)^s u=f$, where $s$ is the parameter, in the case of the integral fractional Laplacian.

The rest of the paper is organized as follows. In Section~\ref{sec:frac}, we describe the numerical approximation of $u(s)$ by $u_{h, k}(s)$. Section 3 describes the construction of the reduced basis space and its corresponding error analysis - the main result of this work. Section 4 provides numerical experiments to illustrate the performance of the proposed methodology.

\section{Spectral Fractional Laplacian and its Numerical Approximations}\label{sec:frac}
	
We start with some notations. Let $\Hr$ be the interpolation space defined by
\begin{equation} \label{def:Hr}
\Hr:=\left\{\begin{array}{ll}
\left(L^2(\Omega),H_0^1(\Omega)\right)_r & \mbox{for } r\in [0,1] \\
H_0^1(\Omega)\cap H^r(\Omega) & \mbox{for } r\in (1,2],
\end{array}
\right.
\end{equation}
where $(\cdot,\cdot)_r$ denotes interpolation using the real method.

Notice that for the particular case $r=1$, we have
\begin{equation*}
\|v\|_{H_0^1(\Omega)}:=\|v\|_{\Hrr}=\|\nabla v\|_{L^2(\Omega)} \quad \forall v\in H_0^1(\Omega),
\end{equation*}
which is equivalent to the $H^1(\Omega)$ norm thanks to the Poincar\'e inequality
\begin{equation} \label{def:Poincare}
\|v\|_{L^2(\Omega)}\leq C_P\|\nabla v\|_{L^2(\Omega)} \quad \forall v\in H_0^1(\Omega).
\end{equation}
To simplify the notation, we write when $r=0$, $\|\cdot\|:=\|\cdot\|_{L^2(\Omega)}=\|.\|_{\mathbb H^0}$. Moreover, $a\lesssim b$ means that $a\leq Cb$ for a constant $C$ that does not depend on $a$, $b$ and the discretization parameters and whose value might change at each occurrence. Also,  $a\approx b$ indicates $a\lesssim b$ and $b\lesssim a$. 

\subsection{Dunford-Taylor Representation}

The function $u(s) \in L^2(\Omega)$ in  \eqref{def:prob} has the following representation \cite{MR1336382}
\begin{equation} \label{u:Dunford}
u(s) =\frac{1}{2\pi i}\int_{\mathcal{C}}z^{-s}(zI+\Delta)^{-1}fdz,
\end{equation}
where $\mathcal{C}$ is a Jordan curve oriented to have the spectrum of $-\Delta$ to its right. Deforming the contour $\mathcal C$ to the negative real axis, we obtain the Balakrishnan formula, valid for $s\in(0,1)$,
\begin{equation} \label{u:Balak}
u(s)=\frac{\sin(s\pi)}{\pi}\int_0^{\infty}\mu^{-s}(\mu I-\Delta)^{-1}fd\mu.
\end{equation}
The numerical integration of the above improper integral relies on a sinc quadrature method after the change of variable $y=\ln(\mu)$, leading to
\begin{equation} \label{def:u}
u(s)=\frac{\sin(s\pi)}{\pi}\int_{-\infty}^{\infty}e^{(1-s)y}(e^y I -\Delta)^{-1}fdy.
\end{equation}
	
\subsection{Finite Element Approximation}
We assume that $\Omega$ is a polyhedral domain and we consider a sequence $\{\Th\}_{h>0}$ of conforming and shape-regular partitions of $\Omega$ into $d$-simplices with maximal mesh size $h<1$. Let $\Vh$ be the  space of  continuous and piecewise linear finite element functions associated with $\Th$. The finite element approximation of \eqref{def:u} is then defined by
\begin{equation} \label{def:uh}
u_h(s):=\frac{\sin(s\pi)}{\pi}\int_{-\infty}^{\infty}e^{(1-s)y}w_h(y)dy,
\end{equation}
where $w_h(y)\in\Vh$ is the solution to
\begin{equation} \label{def:pb_wh}
a(w_h(y),v_h;y) = F(v_h), \quad \forall v_h\in\Vh.
\end{equation}
Here we used the notation 
\begin{equation}\label{def:pb_ay}
a(w,v;y) :=  a_0(w,v)+e^ya_1(w,v):= \int_{\Omega}\nabla w\cdot\nabla v +e^{y}\int_{\Omega}wv
\end{equation}
for $w,v\in H_0^1(\Omega)$ and
\begin{equation} \label{def:pb_F}
F(v)  :=  \int_{\Omega}fv,
\end{equation}
for $v\in H_0^1(\Omega)$.
The Poincar\'e inequality \eqref{def:Poincare} implies that for $v,w \in H^1_0(\Omega)$,
\begin{equation}\label{e:coerc_cont}
\| v \|_{H^1_0(\Omega)}^2 \leq a(v,v;y) \quad \textrm{and} \quad a(v,w;y) \leq (1+C_P^2e^y)\| v \|_{H^1_0(\Omega)}\| w \|_{H^1_0(\Omega)},
\end{equation}
which guarantees that \eqref{def:pb_wh} has a unique solution for any parameter $y \in \mathbb R$ by the Lax-Milgram lemma. 

We now collect some estimates for $w_h(y)$, which will be used in the analysis later.
\begin{lemma} \label{lem:tmp_res}
Let $C_P$ be the Poincar\'e constant in \eqref{def:Poincare}. For any $y,\bar y\in\mathbb{R}$, we have
\begin{equation} \label{apriori_wh}
\|\nabla w_h(y)\|\leq C_P\|f\|, \quad \|w_h(y)\|\leq e^{-y}\|f\| ,
\end{equation}
\begin{equation} \label{error_y_small}
\|\nabla (w_h(y)-w_h(\bar y))\|\leq C_P^3|e^y-e^{\bar y}|\|f\| ,
\end{equation}
\begin{equation} \label{error_y_large_1}
\|w_h(y)-w_h(\bar y)\|\leq e^{-y}|e^{y-\bar y}-1|\|f\|,
\end{equation}
and 
\begin{equation} \label{error_y_large_2}
\|\nabla (w_h(y)-w_h(\bar y))\|\leq \frac{1}{2}e^{-\frac{y}{2}}|e^{y-\bar y}-1|\|f\|.
\end{equation}
\end{lemma}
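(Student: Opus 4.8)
The plan is to derive all four estimates from the weak formulation \eqref{def:pb_wh}, using the coercivity and continuity properties recorded in \eqref{e:coerc_cont} together with the Poincar\'e inequality \eqref{def:Poincare}. For the a priori bounds \eqref{apriori_wh}, I would test \eqref{def:pb_wh} with $v_h = w_h(y)$, giving $\|\nabla w_h(y)\|^2 + e^y\|w_h(y)\|^2 = a(w_h(y),w_h(y);y) = F(w_h(y)) = \int_\Omega f w_h(y) \le \|f\|\,\|w_h(y)\|$. The first bound then follows by applying Poincar\'e to the right-hand side, $\|f\|\,\|w_h(y)\| \le C_P \|f\| \,\|\nabla w_h(y)\|$, and dividing; the second follows by dropping the $\|\nabla w_h(y)\|^2$ term and noting $e^y \|w_h(y)\|^2 \le \|f\|\,\|w_h(y)\|$, hence $\|w_h(y)\| \le e^{-y}\|f\|$.

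For the difference estimates, the key observation is that $\delta := w_h(y) - w_h(\bar y) \in \Vh$ satisfies, for all $v_h \in \Vh$,
\begin{equation*}
a(\delta, v_h; y) = a(w_h(y),v_h;y) - a(w_h(\bar y),v_h;y) = F(v_h) - a(w_h(\bar y),v_h;\bar y) - (e^y - e^{\bar y})a_1(w_h(\bar y),v_h) = -(e^y-e^{\bar y})\int_\Omega w_h(\bar y) v_h,
\end{equation*}
i.e. $\delta$ solves the same type of reaction-diffusion problem with right-hand side $-(e^y - e^{\bar y}) w_h(\bar y)$. Now I would repeat the testing argument with $v_h = \delta$. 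For \eqref{error_y_small}, use $\|\nabla \delta\|^2 \le a(\delta,\delta;y) = -(e^y-e^{\bar y})\int_\Omega w_h(\bar y)\delta \le |e^y - e^{\bar y}| \,\|w_h(\bar y)\|\,\|\delta\|$, then bound $\|w_h(\bar y)\| \le C_P\|\nabla w_h(\bar y)\| \le C_P^2\|f\|$ (from \eqref{apriori_wh}) and $\|\delta\| \le C_P\|\nabla\delta\|$, and divide by $\|\nabla\delta\|$. For \eqref{error_y_large_1}, instead keep the reaction term: $e^y\|\delta\|^2 \le a(\delta,\delta;y) \le |e^y-e^{\bar y}|\,\|w_h(\bar y)\|\,\|\delta\|$, so $\|\delta\| \le e^{-y}|e^y - e^{\bar y}|\,\|w_h(\bar y)\| \le e^{-y}|e^y-e^{\bar y}|\,e^{-\bar y}\|f\| = e^{-y}|e^{y-\bar y}-1|\,\|f\|$ using the sharper bound $\|w_h(\bar y)\| \le e^{-\bar y}\|f\|$ from \eqref{apriori_wh}. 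For \eqref{error_y_large_2}, split the energy: $\|\nabla\delta\|^2 + e^y\|\delta\|^2 \le |e^y-e^{\bar y}|\,\|w_h(\bar y)\|\,\|\delta\|$, apply Young's inequality $|e^y-e^{\bar y}|\,\|w_h(\bar y)\|\,\|\delta\| \le \tfrac14 e^{-y}|e^y-e^{\bar y}|^2\|w_h(\bar y)\|^2 + e^y\|\delta\|^2$ to absorb the reaction term, obtaining $\|\nabla\delta\|^2 \le \tfrac14 e^{-y}|e^y-e^{\bar y}|^2\|w_h(\bar y)\|^2$, then insert $\|w_h(\bar y)\| \le e^{-\bar y}\|f\|$ and take square roots to get $\|\nabla\delta\| \le \tfrac12 e^{-y/2}|e^y-e^{\bar y}|\,e^{-\bar y}\|f\| = \tfrac12 e^{-y/2}|e^{y-\bar y}-1|\,\|f\|$.

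The argument is essentially a sequence of energy estimates, so there is no single hard step; the main thing to be careful about is choosing, in each of the four cases, exactly which term of the coercivity lower bound $\|\nabla\delta\|^2 + e^y\|\delta\|^2 \le a(\delta,\delta;y)$ to retain and which a priori bound on $\|w_h(\bar y)\|$ to plug in (the rough $C_P^2\|f\|$ gradient-type bound versus the sharp $e^{-\bar y}\|f\|$ $L^2$-type bound), so that the stated powers of $e^y$ and the factors $|e^y - e^{\bar y}|$ versus $|e^{y-\bar y}-1| = e^{-\bar y}|e^y - e^{\bar y}| \cdot e^{\bar y}$ come out correctly — note $e^{-y}|e^y-e^{\bar y}| = |1 - e^{\bar y - y}|$ and $e^{-\bar y}|e^y - e^{\bar y}| = |e^{y-\bar y}-1|$, which is where the second form enters in \eqref{error_y_large_1} and \eqref{error_y_large_2}. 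The Young's inequality splitting in the last estimate is the only nonobvious maneuver and is what produces the favorable $e^{-y/2}$ factor rather than the $e^{-y}$ one would get from a cruder bound.
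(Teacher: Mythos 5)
Your proof is correct and follows essentially the same route as the paper: test \eqref{def:pb_wh} with $w_h(y)$ for \eqref{apriori_wh}, derive the difference equation and test with $w_h(y)-w_h(\bar y)$ to get the energy inequality, then obtain \eqref{error_y_small} via Poincar\'e with the $C_P^2\|f\|$ bound, \eqref{error_y_large_1} by keeping the reaction term with the $e^{-\bar y}\|f\|$ bound, and \eqref{error_y_large_2} by the same Young's inequality absorption of the reaction term. No gaps; the choices of which coercivity term to retain and which a priori bound to insert match the paper's proof exactly.
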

\begin{proof}
	Choosing $v_h=w_h(y)$ in \eqref{def:pb_wh} yields
	\begin{equation*}
	\|\nabla w_h(y)\|^2+e^y\|w_h(y)\|^2=\int_{\Omega}fw_h(y)\leq \|f\|\|w_h(y)\|,
	\end{equation*}
	from which the two relations in \eqref{apriori_wh} can be easily deduced. From \eqref{def:pb_wh} we get
	\begin{equation*}
	\int_{\Omega}\nabla(w_h(y)-w_h(\bar y))\cdot\nabla v_h+e^y\int_{\Omega}(w_h(y)-w_h(\bar  y))v_h = (e^{\bar y}-e^y)\int_{\Omega}w_h(\bar y)v_h \quad \forall v_h\in\Vh.
	\end{equation*}
	We now choose $v_h=w_h(y)-w_h(\bar y)$ to get
	\begin{equation} \label{error_y_tmp}
	\|\nabla(w_h(y)-w_h(\bar y))\|^2+e^y\|w_h(y)-w_h(\bar y)\|^2 \leq  |e^{\bar y}-e^y|\|w_h(\bar y)\|\|w_h(y)-w_h(\bar y)\|.
	\end{equation}
	This, the Poincar\'e inequality \eqref{def:Poincare} and \eqref{apriori_wh} with $y=\bar y$ yield \eqref{error_y_small}. 
	
	The estimate \eqref{error_y_large_1} follows from \eqref{error_y_tmp} together with \eqref{apriori_wh} with $y=\bar y$. For \eqref{error_y_large_2}, we invoke Young's inequality to estimate the right hand side of \eqref{error_y_tmp} and get
	\begin{equation*}
	\|\nabla(w_h(y)-w_h(\bar y))\| \leq  \frac{1}{2}e^{-\frac{y}{2}}|e^{\bar y}-e^y|\|w_h(\bar y)\|.
	\end{equation*}
	It remains to invoke (\ref{apriori_wh}) with $y=\bar y$ to derive the desired result and ends the proof.
\end{proof}

We mention that both results in (16) are standard, while the estimates (17), (18) and (19) are less common yet useful in the error analysis below. Moreover, note that \eqref{apriori_wh}-left and \eqref{error_y_small} are favorable for negative $y$, while \eqref{apriori_wh}-right, \eqref{error_y_large_1} and \eqref{error_y_large_2} for positive $y$. This plays a role in our analysis below and is observed in the numerical experiments, see Section \ref{sec:numres}.

We end this section by stating the error in the finite element method derived and analyzed in \cite{BP17}. Before doing this, we define $\alpha \in (0,1]$ to be the elliptic pick-up regularity index, i.e. $\alpha$ is the largest number in $(0,1]$ such that $(-\Delta)$ is an isomorphism from $\Hr$ to $\Hrp$ for all $r\in[0,\alpha]$. Notice that  $\alpha >0$ for Lipschitz domains and $\alpha=1$ when $\Omega$ is convex.
	
\begin{thm} \label{thm:FE_error}
Let $f\in L^2(\Omega)$, $\alpha>0$ denote the elliptic regularity pick-up and $\alpha^*:=\frac{\alpha+\min(\alpha,1-r)}{2}$. Then for any $r\in[0,1]$, we have
\begin{enumerate}[label=\arabic*.]
	\item If $r+2\alpha^*-2s\geq 0$ and $f\in \Ha$ then
		\begin{equation*} \label{eqn:FE_error_case1}
		\|u-u_h\|_{\Hr} \lesssim \ln(h^{-1})h^{2\alpha^*}\|f\|_{\Ha}.
		\end{equation*}
	\item If $r+2\alpha^*-2s\geq 0$ and $f\in \Hae$ with $r+2\alpha^*-2s+2\varepsilon\leq 1+\alpha$ then
		\begin{equation*} \label{eqn:FE_error_case2}
		\|u-u_h\|_{\Hr} \lesssim h^{2\alpha^*}\|f\|_{\Hae}.
		\end{equation*}
	\item If $r+2\alpha^*-2s<0$ then
		\begin{equation*} \label{eqn:FE_error_case3}
		\|u-u_h\|_{\Hr} \lesssim h^{2\alpha^*}\|f\|_{L^2(\Omega)}.
		\end{equation*}
	\end{enumerate}
\end{thm}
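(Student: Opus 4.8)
The plan is to exploit that $u$ and $u_h$ are obtained from the \emph{same} integral representation \eqref{def:u}, \eqref{def:uh}, so that
\begin{equation*}
u-u_h=\frac{\sin(s\pi)}{\pi}\int_{-\infty}^{\infty}e^{(1-s)y}\bigl(w(y)-w_h(y)\bigr)\,dy ,
\end{equation*}
and hence $\|u-u_h\|_{\Hr}$ is controlled by $\int_{\mathbb{R}}e^{(1-s)y}\|w(y)-w_h(y)\|_{\Hr}\,dy$. The whole proof then reduces to a finite element error estimate for the reaction-diffusion problem \eqref{def:pb_wh} in which the dependence on the parameter $y$ is made fully explicit, followed by an integration in $y$. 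Following the dichotomy noted after Lemma~\ref{lem:tmp_res}, I would treat $y\le 0$ and $y\ge 0$ separately.

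For the $y$-explicit finite element estimate I would combine three ingredients. First, C\'ea's lemma with the coercivity and continuity bounds \eqref{e:coerc_cont}, giving $H_0^1(\Omega)$-quasi-optimality with a constant polynomial in $e^y$; for $r<1$ an Aubin--Nitsche duality argument then transfers this to the $\Hr$-norm at the price of an extra factor $h^{\min(\alpha,1-r)}$. Second, the standard approximation properties of $\Vh$ for continuous piecewise linears. Third, a shift estimate for the resolvent: expanding $w(y)$ in the eigenbasis of $(-\Delta)$ shows that $(e^yI-\Delta)^{-1}$ maps $\mathbb{H}^{\sigma}(\Omega)$ into $\mathbb{H}^{\sigma+2\beta}(\Omega)$ with norm $\lesssim e^{-(1-\beta)y}$ for every $\beta\in[0,1]$, which interpolates the elliptic shift estimate with the $L^2\to L^2$ bound $e^{-y}$ from \eqref{apriori_wh}. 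Choosing $\beta$ appropriately and matching Sobolev indices then yields
\begin{equation*}
\|w(y)-w_h(y)\|_{\Hr}\lesssim h^{2\alpha^*}\,g_s(y)\,\|f\|_{\Ha},
\end{equation*}
with a weight $g_s$ that decays exponentially as $y\to-\infty$ and whose growth as $y\to+\infty$ is tamed by using the $L^2$-favorable estimates of Lemma~\ref{lem:tmp_res} in that regime.

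Finally I would insert this bound into the integral, and the three cases fall out from the integrability of $e^{(1-s)y}g_s(y)$ at $\pm\infty$. If $r+2\alpha^*-2s<0$, only $f\in L^2(\Omega)$ can be used; the resulting weight is still integrable against $e^{(1-s)y}$ and one obtains case~3. If $r+2\alpha^*-2s\ge 0$ the integrand is at best borderline at one endpoint: truncating the integral at $|y|\approx\ln(h^{-1})$ and estimating the discarded tail directly with the a priori bounds \eqref{apriori_wh} produces the $\ln(h^{-1})h^{2\alpha^*}$ of case~1, while assuming the slightly stronger regularity $f\in\Hae$ (with $r+2\alpha^*-2s+2\varepsilon\le 1+\alpha$) makes the borderline power strictly summable and removes the logarithm, giving case~2. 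The step I expect to be the main obstacle is precisely this bookkeeping: the continuity constant $1+C_P^2e^y$ in \eqref{e:coerc_cont} blows up as $y\to+\infty$ and must be absorbed by the decay of $w(y)$ --- through the $L^2$-type rather than the $H^1$-type estimates of Lemma~\ref{lem:tmp_res} --- so that, after multiplying by the weight $e^{(1-s)y}$, the $y$-integral is convergent (or only logarithmically divergent) and the powers of $e^y$ cancel in exactly the way that produces the exponent $2\alpha^*$ together with the regularity index $r+2\alpha^*-2s$. This is carried out in detail in \cite{BP17}.
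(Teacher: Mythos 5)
The paper does not prove Theorem~\ref{thm:FE_error} at all: it is quoted from \cite{BP17} (see also \cite{BP15}), so there is no in-paper argument to compare against, and your closing deferral to \cite{BP17} puts you in exactly the same position as the authors. Your outline --- represent $u-u_h$ through the Balakrishnan integrals \eqref{def:u} and \eqref{def:uh}, prove a $y$-explicit finite element error estimate for the resolvent problem, then integrate in $y$, with the borderline case $r+2\alpha^*-2s\geq 0$ producing the $\ln(h^{-1})$ unless the extra $\varepsilon$ of regularity on $f$ restores summability --- is indeed the strategy carried out in those references. One caution if you wanted to make the sketch self-contained: C\'ea's lemma in $H_0^1(\Omega)$ with the continuity constant $1+C_P^2e^y$ from \eqref{e:coerc_cont} is too lossy as $y\to+\infty$ and cannot simply be ``absorbed'' a posteriori; the analysis in \cite{BP15,BP17} instead works with the $y$-dependent energy norm $\bigl(\|\nabla \cdot\|^2+e^y\|\cdot\|^2\bigr)^{1/2}$, for which quasi-optimality holds with a constant uniform in $y$, and extracts the decay in $y$ from parameter-explicit resolvent error estimates (the analogue for $w-w_h$ of the bounds in Lemma~\ref{lem:tmp_res}); that quantitative step, which is where the exponent $2\alpha^*$ and the index $r+2\alpha^*-2s$ actually emerge, is asserted rather than established in your proposal.
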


\subsection{Sinc quadrature approximation}
	
We discuss the sinc quadrature approximation leading to the fully discrete approximation $u_{h,k}$ given by \eqref{def:uk}.
Recall that $k>0$ is the sinc quadrature parameter and that $M_s$, $N_s$ are given by \eqref{def:NsMs}. This choice is dictated from the analysis of the sinc quadrature error, which is the subject of the following theorem; we refer to \cite{BLP18} for its proof. 
	
\begin{thm} \label{thm:sinc_error}
Let $f\in L^2(\Omega)$ and  $r\in[0,1]$. 
\begin{enumerate}[label=\arabic*.]
	\item If $s>r/2$ then
		\begin{equation*} \label{eqn:sinc_error_case1}
		\|u_h(s)-u_{h,k}(s)\|_{\Hr} \lesssim \left(e^{-\frac{\pi^2}{k}}+e^{-(1-s)M_sk}+e^{-sN_sk}\right)\|f\|_{L^2(\Omega)}.
		\end{equation*} \\
	\item If $s\leq r/2$ and $f\in \Hrse$ with $r-2s+\varepsilon\in[0,1+\alpha]$ then
		\begin{equation*} \label{eqn:sinc_error_case2}
		\|u_h(s)-u_{h,k}(s)\|_{\Hr} \lesssim \left(e^{-\frac{\pi^2}{k}}+e^{-(1-s)M_sk}+e^{-sN_sk}\right)\|f\|_{\Hrse}.
		\end{equation*}
	\end{enumerate}
\end{thm}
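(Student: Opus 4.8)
The plan is to diagonalise the discrete reaction--diffusion problems and reduce the claim to a single scalar quadrature estimate bounded uniformly over the spectrum of the discrete Laplacian $-\Delta_h$, then to split that scalar error into the error of the \emph{infinite} sinc sum plus the two discarded tails. Let $\{\lambda_{j,h},\psi_{j,h}\}_j\subset\mathbb R^+\times\Vh$ be the eigenpairs of $-\Delta_h$ and $f_{j,h}:=\int_\Omega f\psi_{j,h}$. Then $w_h(y)=\sum_j f_{j,h}(e^{y}+\lambda_{j,h})^{-1}\psi_{j,h}$, and the scalar Balakrishnan identity $\frac{\sin(s\pi)}{\pi}\int_{-\infty}^\infty\frac{e^{(1-s)y}}{e^y+\lambda}\,dy=\lambda^{-s}$ (change of variables $\mu=e^y$) gives $u_h(s)=\sum_j\lambda_{j,h}^{-s}f_{j,h}\psi_{j,h}$ and $u_{h,k}(s)=\sum_j Q_k(\lambda_{j,h})f_{j,h}\psi_{j,h}$, where $Q_k(\lambda):=\frac{k\sin(s\pi)}{\pi}\sum_{l=-M_s}^{N_s}\frac{e^{(1-s)y_l}}{e^{y_l}+\lambda}$. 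Using the equivalences $\|{\textstyle\sum_j}c_j\psi_{j,h}\|_{\Hr}^2\lesssim\sum_j\lambda_{j,h}^r|c_j|^2$ for $r\in[0,1]$ and $\sum_j\lambda_{j,h}^{r-2s+\varepsilon}|f_{j,h}|^2\lesssim\|f\|_{\Hrse}^2$ for $r-2s+\varepsilon\in[0,1+\alpha]$ (the range restriction in case~2 is precisely what this last bound needs), it then suffices to show $\lambda^{r/2}|\lambda^{-s}-Q_k(\lambda)|\lesssim e^{-\pi^2/k}+e^{-(1-s)M_sk}+e^{-sN_sk}$ for all $\lambda\ge\lambda_{1,h}$ in case~1, and the same with the weight $\lambda^{r/2}$ replaced by $\lambda^{s-\varepsilon/2}$ in case~2; note $\lambda_{1,h}$ is bounded below uniformly in $h$, so nonpositive powers of $\lambda$ are harmless.

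I would write $\lambda^{-s}-Q_k(\lambda)=E_{\mathrm{sinc}}(\lambda)+E_-(\lambda)+E_+(\lambda)$, where $E_{\mathrm{sinc}}$ compares $\frac{\sin(s\pi)}{\pi}\int_{\mathbb R}g_\lambda$ with the infinite sum $\frac{k\sin(s\pi)}{\pi}\sum_{l\in\mathbb Z}g_\lambda(y_l)$, with $g_\lambda(y):=\frac{e^{(1-s)y}}{e^y+\lambda}$, and $E_\mp$ collect the terms with $l<-M_s$ and $l>N_s$. For $E_{\mathrm{sinc}}$ I would invoke the classical sinc quadrature theorem: $g_\lambda$ extends holomorphically to the strip $\{|\Im z|<\pi\}$ (the zeros of $e^z+\lambda$ lie on $\Im z=\pm\pi$) and is integrable along $\Im z=\pm d$ for every $d<\pi$, so the quadrature error is $\lesssim e^{-\pi d/k}\,\mathcal N(g_\lambda,d)$ with $\mathcal N(g_\lambda,d):=\int_{\mathbb R}(|g_\lambda(x+id)|+|g_\lambda(x-id)|)\,dx$; the shift $x=\ln\lambda+t$ shows $\mathcal N(g_\lambda,d)=\lambda^{-s}\int_{\mathbb R}\frac{e^{(1-s)t}}{|e^{t+id}+1|}\,dt$, which is finite for $d<\pi$ and only logarithmically divergent as $d\uparrow\pi$. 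Taking $d=\pi-k$ then gives $\lambda^{r/2}|E_{\mathrm{sinc}}(\lambda)|\lesssim\lambda^{r/2-s}e^{-\pi^2/k}\lesssim e^{-\pi^2/k}$ because $r/2-s\le0$ in case~1 (and likewise with weight $\lambda^{s-\varepsilon/2}$ and exponent $-\varepsilon/2<0$ in case~2), the factor $\sin(s\pi)$ absorbing the $\tfrac1{s(1-s)}$ carried by the $t$-integral.

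For the tails I would use only elementary pointwise bounds on $g_\lambda$. For $l<0$, $g_\lambda(y_l)\le\lambda^{-1}e^{(1-s)y_l}$, so summing a geometric series yields $\lambda^{r/2}|E_-(\lambda)|\lesssim\frac{\sin(s\pi)}{\pi(1-s)}\lambda^{r/2-1}e^{-(1-s)M_sk}\lesssim e^{-(1-s)M_sk}$ (and similarly with $\lambda^{s-\varepsilon/2-1}$ in case~2). For $l>0$ I would interpolate the two bounds $g_\lambda(y_l)\le e^{-sy_l}$ and $g_\lambda(y_l)\le\lambda^{-1}e^{(1-s)y_l}$ into $g_\lambda(y_l)\le\lambda^{\theta-1}e^{(1-s-\theta)y_l}$ (Young's inequality on $e^{y_l}+\lambda\ge(e^{y_l})^\theta\lambda^{1-\theta}$), valid for $\theta\in[0,1]$, and choose $\theta$ as large as the requirement ``the resulting power of $\lambda$ is $\le0$'' allows. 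Since summability of the geometric series forces $\theta>1-s$, this is feasible exactly when the weight exponent is $<s$, i.e.\ when $s>r/2$ (case~1) and, thanks to $\varepsilon>0$, unconditionally in case~2 — this is precisely why the extra regularity $f\in\Hrse$ and the strictly positive $\varepsilon$ are needed when $s\le r/2$. Summing then bounds $E_+$ by the remaining exponentials, and collecting the three pieces — together with the trivial $e^{-(1-s)M_sk}\le e^{-\pi^2/k}$ and $e^{-sN_sk}\le e^{-\pi^2/k}$ coming from \eqref{def:NsMs} — finishes the proof.

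The step I expect to be most delicate is making every estimate simultaneously uniform in $\lambda$ across the (unbounded, as $h\to0$) discrete spectrum and robust as $s$ approaches $0$, $1$ or $r/2$: the exponents $1-s-\theta$ and $s-r/2$ degenerate near $s=r/2$, the constants $\tfrac1s,\tfrac1{1-s},\tfrac1{\varepsilon}$ blow up at the endpoints, and they must be balanced against the matching exponential factors while one keeps just enough negative powers of $\lambda$ to close the argument in $\Hr$. The analyticity-strip argument for $E_{\mathrm{sinc}}$, though standard, must also be carried out uniformly in $\lambda$, since both the nearest singularity of $g_\lambda$ and the size of $\mathcal N(g_\lambda,d)$ move with $\lambda$.
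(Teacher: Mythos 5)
Note first that the paper does not prove Theorem~\ref{thm:sinc_error} at all: it is quoted from \cite{BLP18}. Your architecture --- diagonalizing in the discrete eigenbasis, reducing to a weighted scalar estimate for $\lambda^{-s}-Q_k(\lambda)$, handling the infinite sinc sum by strip analyticity and the truncated tails by elementary pointwise bounds, and invoking discrete--continuous norm equivalences (the latter being exactly where the restriction $r-2s+\varepsilon\in[0,1+\alpha]$ enters) --- is indeed the strategy of that reference, so the approach is the right one. The genuine gap is in the last step for $E_+$. Your own constraint analysis shows that with the weight $\lambda^{r/2}$ the admissible interpolation parameter is $\theta\le 1-r/2$, so the geometric tail sum decays like $e^{-(\theta-(1-s))N_sk}\le e^{-(s-r/2)N_sk}$, and in case~2 the weight $\lambda^{s-\varepsilon/2}$ caps the decay at $e^{-(\varepsilon/2)N_sk}$. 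The sentence ``summing then bounds $E_+$ by the remaining exponentials'' is therefore not substantiated: for $r>0$ in case~1, and whenever $\varepsilon<2s$ in case~2, what you obtain is strictly weaker than the stated term $e^{-sN_sk}$, and noting $e^{-sN_sk}\le e^{-\pi^2/k}$ does not help since your exponent $\tfrac{s-r/2}{s}\,\tfrac{\pi^2}{k}$ (resp.\ $\tfrac{\varepsilon}{2s}\,\tfrac{\pi^2}{k}$) is smaller than $\tfrac{\pi^2}{k}$.

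Moreover, this loss cannot be repaired within your sup-over-$\lambda$ reduction: the statement imposes no relation between $h$ and $k$, so the discrete spectrum may contain $\lambda_{j,h}\approx e^{N_sk}$; taking $f=\psi_{j,h}$, every discarded term with $l>N_s$ has the same sign and is of size $\approx k e^{-s y_l}$, so the weighted tail is bounded below by $c\,\lambda_{j,h}^{r/2}e^{-sN_sk}\approx e^{-(s-r/2)N_sk}$ with $\|f\|_{L^2(\Omega)}=1$, and there is no cancellation against $E_{\mathrm{sinc}}$ or $E_-$. So either you settle for the weaker tail exponents $e^{-(s-r/2)N_sk}$ and $e^{-(\varepsilon/2)N_sk}$ --- which is what this argument naturally delivers, and which still suffices for the way the theorem is used in Section~\ref{s:rb} after enlarging $N_s$ accordingly --- or an additional idea beyond what you propose is needed to reach the literal statement. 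Two smaller points: the classical sinc bound is $\lesssim e^{-2\pi d/k}\,\mathcal N(g_\lambda,d)$, not $e^{-\pi d/k}$; your (valid but lossy) rate forces $d=\pi-k$ and then $\mathcal N(g_\lambda,\pi-k)\approx\lambda^{-s}\ln(1/k)$ leaves a spurious logarithm, whereas the correct rate with $d=\pi/2$ gives $e^{-\pi^2/k}$ cleanly. Finally, the two asserted equivalences --- $\|\sum_j c_j\psi_{j,h}\|_{\Hr}^2\approx\sum_j\lambda_{j,h}^r|c_j|^2$ uniformly in $h$, and $\sum_j\lambda_{j,h}^{r-2s+\varepsilon}|f_{j,h}|^2\lesssim\|f\|_{\Hrse}^2$ up to index $1+\alpha$ --- are the nontrivial finite-element ingredients of the cited proof and need either a proof or a precise reference; they are not free.
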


\section{Reduced Basis Approximation}\label{s:rb}
	
The computation of $u_{h,k}$ in \eqref{def:uk} involves the finite element solution $w_h(y)$ to the reaction diffusion problem \eqref{def:pb_wh} for $M_s+N_s+1$ different values of the parameter $y\in \I_s:=[-M_sk,N_sk]$. We propose to use the reduced basis method to approximate the entire map $y \mapsto w_h(y)$. Notice that the bilinear form $a(\cdot,\cdot;y)$ defining $w_h(y)$ in \eqref{def:pb_wh} is not affine in $y$. However, it becomes affine for $\mu=e^y$.

\subsection{Construction for a fixed $s$}\label{s:construction}
The reduced basis space
\begin{equation*}
\Vn = \Span\{w_h(y^1),...,w_h(y^n)\} \subset \mathbb V_h
\end{equation*}
is constructed using a greedy strategy \cite{BMPPT12}. Starting with $y^1=0$, $y^{m+1}   \in \I_s$ is selected iteratively to maximize the error
\begin{equation} \label{def:err_Wm}
W_m(y):=\|w_h(y)-P_{\mathbb V_h^{m}}w_h(y)\|_{H_0^1(\Omega)} 
\end{equation}
on $\I_s$, i.e.,
\begin{equation*}
y^{m+1}:=\argmax_{y\in\I_s}W_m(y). 
\end{equation*}
Here $P_{\mathbb V_h^m}w_h(y)\in \mathbb V_h^m$ is the unique solution (from Lax-Milgram theory) to
\begin{equation}\label{e:galerkin}
a(P_{\mathbb V_h^m}w_h(y),v_m;y) = a(w_h(y),v_m;y),  \quad \forall v_m\in\mathbb V_h^m.
\end{equation}
Notice that in view of the definition \eqref{def:pb_wh} of $w_h(y)$, the above relation is equivalent to
\begin{equation*}
a(P_{\mathbb V_h^m}w_h(y),v_m;y) = F(v_m),  \quad \forall v_m\in\mathbb V_h^m.
\end{equation*}
The enrichment of the reduced basis space ends when 
\begin{equation}\label{e:eps_W}
\max_{y \in \I_s} W_{m}(y) \leq \varepsilon \|f\|
\end{equation}
for a prescribed accuracy $\varepsilon>0$ or when a maximum number of basis functions $N_{\max}$ is reached. Note that the relations \eqref{error_y_small} and \eqref{error_y_large_2} guarantee that \eqref{e:eps_W} can always be achieved by a uniform selection of points $y$ in $\mathcal D_s$. The aim of the Greedy algorithm is to provide an alternate selection performing as well but of less cardinality.
	
The error $W_{m}(y)$ defined in \eqref{def:err_Wm} is not a computable quantity and is usually replaced by an equivalent computable quantity leading to the so-called \emph{weak greedy algorithm} \cite{DPW13}. In this work, we use the residual based \emph{a posteriori} error estimate \cite{PR06}
\begin{equation} \label{def:apost}
\|r_{m}(\cdot;y)\|_{\Vh'}:=\sup_{v_h\in \Vh}\frac{r_{m}(v_h;y)}{\|v\|_{H_0^1(\Omega)}},
\end{equation}
where
$$r_{m}(v_h;y):=F(v_h)-a(P_{\mathbb V_h^{m}}w_h(y),v_h;y).$$
We have the following equivalence relation between the error $W_{m}(y)$ and its surrogate
\begin{equation} \label{eqn:equiv_err_est}
(1+C_P^2e^y)^{-1}\|r_m(\cdot;y)\|_{\Vh'} \leq W_m(y)\leq \|r_m(\cdot;y)\|_{\Vh'}
\end{equation}
for all $y\in \mathbb R$. This follows from
\begin{equation*}
\|r_m(\cdot;y)\|_{\Vh'}=\sup_{v_h\in\Vh}\frac{a(w_h(y)-P_{\mathbb V_h^{m}}w_h(y),v_h;y)}{\|v_h\|_{H_0^1(\Omega)}},
\end{equation*}
where $w_h(y)$ satisfies \eqref{def:pb_wh}, and the coercivity and continuity of the bilinear form $a$  \eqref{e:coerc_cont}.

Hence, selecting the samples using $\|r_{m}(\cdot;y)\|_{\Vh'}$ as surrogate for the error $W_{m}(y)$ yields
\begin{equation*}
W_m(y^{m+1})\ge \gamma_s\max_{y\in \I_s} W_m(y),
\end{equation*}
where
\begin{equation}\label{e:gamma_s}
\gamma_s:=\left(\max_{y\in \I_s} (1+C_P^2e^y)\right)^{-1}= (1+C_P^2e^{N_sk})^{-1}.
\end{equation}
The parameter $\gamma_s$ corresponds to the constant in the weak greedy algorithm, see \cite{DPW13} for more details, and will appear in the analysis below.

The dual norm can be computed using the Riesz representation theorem, see for instance \cite{PR06,EPR10} for more details. However, evaluating $\|r_{m}(\cdot;y)\|_{\Vh'}$ for every $y$ in $\I_s$ remains unfeasible. In practice, the maximization is performed over a finite dimensional \emph{training set} $\Theta_s\subset \I_s$, either chosen sufficiently fine to retain the performance of the algorithm, see for instance \cite{CD15}, or based on a random selection of moderate size \cite{CDD18}.

The reduced basis space is constructed \emph{offline} and gives the following \emph{online} approximation of $u_{h,k}(s)$
\begin{equation} \label{def:un}
u_{h,k}^n(s)=\frac{k\sin(s\pi)}{\pi}\sum_{l=-M_s}^{N_s}e^{(1-s)y_l}w_h^n(y_l), \quad w_h^n(y_l):=P_{\Vn}w_{h}(y_l).
\end{equation}

\begin{remark}
The approximate solution \eqref{def:uk} obtained without using the reduced basis method requires to solve $M_s+N_s+1$ sparse finite element systems of dimension $N_h$. In comparison, the solution of $M_s+N_s+1$ systems are needed for the approximation \eqref{def:un}, each requiring $\mathcal{O}(n^3)$ operations, where $n$ stands for the dimension of the reduced basis space. The latter is built once for all (\emph{offline stage}) with a computational cost dominated by the solve of $n$ sparse finite element systems. The value of $n$ depends on the Kolmogorov $n$-width of the solution manifold $\{w_h(y): \, y\in\mathcal{D}_s\}$. We refer for instance to \cite{QMN2016,CD15} for a detailed complexity analysis of the reduced basis method but note that typically for elliptic problems we have $n <\!\!< N_h$. Finally, we anticipate that the proposed reduced basis space is independent on $s$, see Section \ref{sec:universal}.
\end{remark}	

\subsection{Error analysis for a fixed $s$}\label{s:error_s}
	
We now analyze the distortion between $u(s)=(-\Delta)^{-s}f$ and its reduced basis approximation $u_{h,k}^n(s)$ given by \eqref{def:un} in the $\Hr$ norm. In order to avoid unnecessary technicalities, we assume from now on that $r\in[0,1]$ is chosen such that $r<2s$, which includes the natural choice $r=s$ leading to the energy error and $r=0$ for the $L^2(\Omega)$ error. The discussion below can be readily extended to the case $r\geq 2s$ by accounting for the log factor $\ln(h^{-1})$ in the finite element approximation (see Theorem~\ref{thm:FE_error}). 
For $u(s)\in \Hr$, we decompose the error into three parts
\begin{equation}\label{e:error_decom}
\begin{split}
	\|u(s)-u_{h,k}^n(s)\|_{\Hr}&\leq \|u(s)-u_h(s)\|_{\Hr}+\|u_h(s)-u_{h,k}(s)\|_{\Hr}\\
	& \qquad +\|u_{h,k}(s)-u_{h,k}^n(s)\|_{\Hr},
\end{split}
\end{equation}
corresponding to the finite element error, the sinc quadrature error and the reduced basis error, respectively. 
Given a target tolerance $\varepsilon>0$, we construct a reduced basis space such that \eqref{e:eps_W} holds.  In view of Theorems \ref{thm:FE_error} and \ref{thm:sinc_error}, we select the space discretization and sinc quadrature parameters $h$ and $k$ to balance the finite element and sinc quadrature errors, i.e.,
\begin{equation} \label{eqn:scaling_h_k}
C_{\textrm{FEM}} h^{2\alpha^*} = C_{\textrm{SINC}} e^{-\frac{\pi^2}{k}} = \varepsilon
\end{equation}
for some absolute constants $C_{\textrm{FEM}}$ and $C_{\textrm{SINC}}$.
	
We now assess the error in the reduced basis modeling by analyzing the behavior of $\|u_{h,k}(s)-u_{h,k}^n(s)\|_{\Hr}$ as $n$ increases. From the definitions \eqref{def:uk} and \eqref{def:un} of $u_{h,k}(s)$ and $u_{h,k}^n(s)$, respectively, we have
\begin{equation}\label{e:Es}
\|u_{h,k}(s)-u_{h,k}^n(s)\|_{\Hr} \leq \frac{k\sin(s\pi)}{\pi}\sum_{l=-M_s}^{N_s}e^{(1-s)y_l}\|w_h(y_l)-w_h^n(y_l)\|_{\Hr}.
\end{equation}

Key ingredients in our analysis are estimates for the reduced basis errors $\|w_h(y_l)-w_h^n(y_l)\|_{\Hr}$ in approximating the inner problems. We discuss this now. Recall that the reduced basis error for the inner problem is given by
\begin{equation}\label{e:error_rb_def}
\sup_{y \in \I_s}\| \nabla(w_h(y)-w_h^n(y)) \|=\sup_{y \in \I_s}\| \nabla(w_h(y)-P_{\Vn}w_h(y)) \|.
\end{equation}
The Kolmogorov $n$-width 
\begin{equation} \label{eqn:Kolmo}
d_n:=\inf_{\dim(Y_n)\le n} \,\, \sup_{y\in \I_s} \,\, \inf_{v_n\in Y_n} \| \nabla(w_h(y)-v_n)\|, \quad n\ge 1,
\end{equation}
is the benchmark for the best achievable decay. By convention, we set
\begin{equation} \label{eqn:Kolmo_0}
d_0:=\sup_{y\in \I_s} \| \nabla w_h(y)\|.
\end{equation}
It is quite remarkable that the linear space $\Vn$ constructed by the (weak) greedy selection discussed in Section~\ref{s:construction} leads to an error \eqref{e:error_rb_def} equivalent to $d_n$ \cite{DPW13}. In particular, an exponential decay of the Kolmogorov $n$-width guarantees an exponential decay of the (weak) greedy error. In order to prove that the error in (\ref{e:error_rb_def}) decays exponentially with $n$, see Lemma \ref{lem:error_RB_w} below, we will thus show that the Kolmogorov $n$-width exhibits an exponential decay.

To facilitate the analysis of $d_n$, we use the notations in \eqref{def:pb_ay} and provide a representation of the finite element functions $w_h(y)$ in term of the eigenpairs $\{\mu_i,\varphi_i\}_{i=1}^{N_h} \subset \mathbb{R}^+ \times\Vh$, $N_h:=\textrm{dim}(\Vh)$, of the generalized eigenvalue problem
\begin{equation*}
a_1(\varphi_i,v_h) = \mu_i a_0(\varphi_i,v_h), \qquad \forall v_h \in \Vh.
\end{equation*}
Without loss of generality, we assume that the $\varphi_i$ are $H_{0}^1$-orthonormal, i.e.
\begin{equation*}
a_0(\varphi_i,\varphi_j) = \delta_{ij}, \qquad 1\leq i,j\leq N_h.
\end{equation*}
The inverse inequality
\begin{equation*}
\|\nabla v_h\| \leq C_I h^{-1}\|v_h\| \quad \forall v_h\in\Vh,
\end{equation*}
together with the Poincar\'e inequality \eqref{def:Poincare}, yield
\begin{equation} \label{eqn:mu}
C_I^{-2} h^2 \leq \mu_i \leq C_P^2, \quad 1\leq i\leq N_h.
\end{equation}
With these notations, we can rewrite $w_h(y)$ in (\ref{def:pb_wh}) as
\begin{equation}\label{evrepre}
w_h(y) = \sum_{i = 1}^{N_h}\frac{f_i\varphi_i}{1 + e^y \mu_i}, \quad f_i:=F(\varphi_i).
\end{equation}

We are now in position to assess the reduced basis approximation property. 
\begin{lemma} \label{lem:error_RB_w}
For any $n\ge 1$ we have
\begin{equation} \label{eqn:exp_error_RB_w}
\sup_{y \in \I_s}\|\nabla(w_h(y)-P_{\Vn}w_h(y))\| \leq \gamma_s^{-1}C_1 e^{-C_2(h)n}\|f\|,
\end{equation}
where $\gamma_s$ is given by \eqref{e:gamma_s}, $C_1$ is a constant only depending on $C_P$ and 
\begin{equation} \label{eqn:cst_C2}
C_2(h)\approx\frac{1}{\ln(C_P^2C_I^2h^{-2})} \qquad \textrm{when }h\to 0.
\end{equation}
\end{lemma}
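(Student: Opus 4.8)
The plan is to establish an exponential bound on the Kolmogorov $n$-width $d_n$ defined in \eqref{eqn:Kolmo} and then invoke the equivalence between the (weak) greedy error and $d_n$ proved in \cite{DPW13}, which introduces the factor $\gamma_s^{-1}$ appearing in \eqref{eqn:exp_error_RB_w}. The core of the matter is therefore to bound $d_n$; the weak-greedy transfer is a black-box citation. To control $d_n$, I would exploit the representation \eqref{evrepre}, namely $w_h(y)=\sum_{i=1}^{N_h} f_i\varphi_i/(1+e^y\mu_i)$, which exhibits $w_h(y)$ as a combination of the rational functions $y\mapsto 1/(1+e^y\mu_i)$ of the single parameter $y$, with the $\mu_i$ confined to the compact interval $[C_I^{-2}h^2,C_P^2]$ by \eqref{eqn:mu}. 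Equivalently, after the change of variable $\mu=e^y$, one is approximating a family of functions of the form $\mu\mapsto 1/(\mu+\mu_i^{-1})$ (up to normalization), i.e. Cauchy kernels with poles in a known compact set bounded away from the positive real axis.

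The key step is a classical rational-approximation argument in the spirit of \cite{DS19}: for $\mu$ ranging over $(0,\infty)$ (or, more precisely, over the parametric values $e^{y}$ with $y\in\mathcal D_s=[-M_sk,N_sk]$) the map $\mu\mapsto(\mu+t)^{-1}$, uniformly over $t$ in a compact interval $[t_{\min},t_{\max}]$ with $t_{\min}\approx C_P^{-2}$ and $t_{\max}\approx C_I^2 h^{-2}$, can be approximated by an exponential sum (or a rational function, or a partial fraction decomposition) of length $n$ with error decaying like $C_1 e^{-C_2 n}$ in the sup norm over $\mu$. The rate $C_2$ degrades logarithmically in the width of the spectral interval: since the relevant scale separation is $t_{\max}/t_{\min}\approx C_P^2 C_I^2 h^{-2}$, one gets $C_2(h)\approx 1/\ln(C_P^2 C_I^2 h^{-2})$ as $h\to0$, which is exactly \eqref{eqn:cst_C2}. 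Plugging such an exponential-sum approximant into \eqref{evrepre} term by term and using $\sum_i |f_i| \lesssim \|f\|$-type control (or rather an $\ell^2$ bound via orthonormality of the $\varphi_i$ in $H_0^1$, together with $\sum_i f_i^2 = \|\nabla \Pi_h f\|^2$ where $\Pi_h$ is the appropriate projection, hence $\lesssim \|f\|^2$ by Poincaré) produces a space $Y_n$ of dimension $\leq n$ realizing the claimed bound on $d_n$, and then $\sup_{y\in\mathcal D_s}\|\nabla(w_h(y)-P_{\Vn}w_h(y))\|\leq \gamma_s^{-1} d_n \leq \gamma_s^{-1} C_1 e^{-C_2(h)n}\|f\|$.

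One subtlety I would be careful about is that the span of $n$ well-chosen functions $1/(\mu+t_j)$ gives a rational approximant whose associated reduced space has dimension $n$, but the reduced space actually used is spanned by snapshots $w_h(y^l)$ of the full vector $w_h$, not by individual eigenmodes; this is fine because $d_n$ is defined as an infimum over \emph{all} $n$-dimensional subspaces $Y_n\subset\Vh$, so any convenient construction (e.g. $Y_n=\Span\{w_h(\tilde y^1),\dots,w_h(\tilde y^n)\}$ for a Zolotarev-type or geometric choice of nodes $\tilde y^j$, or even an abstract span of rational modes) suffices to upper-bound $d_n$. I expect the main obstacle to be making the rational/exponential-sum approximation estimate precise with explicitly tracked constants — i.e. verifying that the best $n$-term approximation of the Cauchy kernel on the relevant interval really achieves rate $e^{-cn/\ln(h^{-2})}$ with $c$ absolute, uniformly over the pole location $t\in[t_{\min},t_{\max}]$ and over the parametric variable — and in carefully checking that the length of the parametric domain $\mathcal D_s$, which grows like $k^{-2}$ as $k\to0$, does not spoil the uniform sup-norm bound (it does not, because the kernel $1/(1+e^y\mu_i)$ is uniformly bounded and, in fact, decays at both ends of $\mathcal D_s$, as reflected in the one-sided estimates of Lemma~\ref{lem:tmp_res}).
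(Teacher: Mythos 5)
Your proposal follows essentially the same route as the paper's proof: expand $w_h(y)$ in the generalized eigenbasis as in \eqref{evrepre}, use a Zolotar\"ev-type rational approximation of the kernel $1/(1+e^y\mu_i)$ over the spectral interval determined by \eqref{eqn:mu} (whose ratio $C_P^2C_I^2h^{-2}$ yields the rate \eqref{eqn:cst_C2}) to build an explicit $n$-dimensional space of snapshots bounding the Kolmogorov width $d_n\lesssim e^{-C^*(h)n}\|f\|$, and then invoke the weak-greedy/$n$-width comparison of \cite{DPW13,CD15}, which is exactly where $\gamma_s^{-1}$ enters. The only imprecision is your final chain $\sup_{y\in\I_s}\|\nabla(w_h(y)-P_{\Vn}w_h(y))\|\le\gamma_s^{-1}d_n$: the cited comparison result is not this pointwise inequality but converts $d_n\le c_1e^{-c_2n}\|f\|$ into a weak-greedy bound with the exponent reduced by a fixed factor (the paper takes $C_2=c_2/6$ via Corollary 8.4 in \cite{CD15}) and a constant $\lesssim \gamma_s^{-1}c_1$, which is harmless for \eqref{eqn:exp_error_RB_w} since $C_2(h)$ is only specified up to constants.
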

\begin{proof}
We follow \cite{DS19} to construct a linear space $\Wn\subset\Vh$ with $\dim(\Wn)\leq n$ such that for some constants $c_1$, $c_2$ and $n\geq 1$ we have
\begin{equation} \label{eqn:step1}
d_n \leq \sup_{y \in \I_s}\inf_{v_h^n\in \Wn}\| \nabla(w_h(y)-v_h^n)\|\le c_1 e^{-c_2n}\| f\|.
\end{equation}
Let $\Wn:=\Span\{w_h(y_1),\ldots,w_h(y_n)\}$, where the $y_j$ are chosen such that the $e^{y_j}$ are the transformed Zolotar\"ev points $[C_P^{-2},C_I^2h^{-2}]$ as in \cite{DS19}, see also \cite{gonvcar1969zolotarev,MR1328645}. Notice that this interval is dictated by the lower and upper bound of the eigenvalues $\mu_i$, see \eqref{eqn:mu}. Now, given $y\in\I_s$, we define the approximation
\begin{equation} \label{eqn:vhn}
v_h^n({y}) := \sum_{j=1}^n\alpha_j(y)w_h(y_j)\in\Wn,
\end{equation}
where the coefficients $\alpha_j(y)$ are such that
\begin{equation*}
\frac{1}{1+e^{y}e^{-y_k}} = \sum_{j=1}^n\alpha_j(y)\frac{1}{1+e^{y_j}e^{-y_k}}, \qquad k=1,..,n.
\end{equation*}
The above system is a particular rational interpolation problem and has a unique solution according to Lemma 5.13 in \cite{DS19}.
Furthermore, thanks to Lemma 5.17 in \cite{DS19}, we have
\begin{equation*}
\left|\frac{1}{1+e^{y}\mu_i}-\sum_{j=1}^n\alpha_j(y)\frac{1}{1+e^{y_j}\mu_i}\right|\lesssim \frac{1}{1+e^{y}\mu_i}e^{-C^*n}  ,  \quad i=1,\ldots, N_h, 
\end{equation*}
where
\begin{equation*}
C^*=C^*(h)\approx\frac{1}{\ln(C_P^2C_I^2h^{-2})}.
\end{equation*}
Hence, the error between $w_h(y)$ in \eqref{evrepre} and $v_h^n({y})$ in \eqref{eqn:vhn} satisfies
\begin{eqnarray*} \label{eqn:err_Z2}
\|w_h(y)-v_h^n({y})\|_{H_0^1}^2 & = &  \sum_{i=1}^{N_h}f_i^2\left(\frac{1}{1+e^y\mu_i}-\sum_{j=1}^n\alpha_j(y)\frac{1}{1+e^{y_j}\mu_i}\right)^2 \nonumber \\
 & \lesssim & e^{-2C^*n}\sum_{i=1}^{N_h}f_i^2\left(\frac{1}{1+e^{y}\mu_i}\right)^2 \nonumber \\
 & \lesssim & e^{-2C^*n}\|\nabla w_h(y)\|^2,
\end{eqnarray*}
where we have used the $H_{0}^1$-orthonormality of the $\{\varphi_i\}_{i=1}^{N_h}$. 
With the help of  \eqref{apriori_wh}, this implies
\begin{equation} \label{eqn:err_Z}
\|w_h(y)-v_h^n({y})\|_{H_0^1}^2 \lesssim e^{-2C^*n}C_P^2\|f\|^2.
\end{equation}
The above estimate is \eqref{eqn:step1} with $c_2=C^*$ and $c_1$ only depending on $C_P$ and the hidden constant in \eqref{eqn:err_Z}. Moreover, thanks to \eqref{apriori_wh} we also have $d_0\leq C_P\|f\|$, where $d_0$ is defined in \eqref{eqn:Kolmo_0}. Therefore, we have shown that the Kolmogorov $n$-width (see \eqref{eqn:Kolmo} and \eqref{eqn:Kolmo_0}) satisfies
\begin{equation} \label{eqn:step2}
d_n \leq c_1e^{-c_2n} \| f\|, \quad n\ge 0.
\end{equation}
	
To conclude, it remains to relate the error decay of the reduced basis generated by the weak greedy algorithm with parameter $\gamma_s$ (see $\eqref{e:gamma_s}$) and the Kolmogorov $n$-width $d_n$. Corollary 8.4 in \cite{CD15}, see also Corollary 3.3 in \cite{DPW13}, guarantees that (\ref{eqn:step2}) implies \eqref{eqn:exp_error_RB_w} with $C_2=c_2/6=C^*/6$ and $C_1=c_1\max(\sqrt{2},\gamma_s e^{C_2})\lesssim c_1$.
\end{proof}

\begin{remark} \label{rem:Maday}
We mention that an exponential decay for the reduced basis error for one dimensional parametric problem of the form (\ref{def:pb_wh}) has already been obtained in \cite{maday2002priori,MPT02}. However, the exponential decay is guaranteed for $n \geq n_\textrm{crit}$ for some integer $n_\textrm{crit}$ depending on the length of the parameter interval $[-M_s k,N_s k]$. We did not pursue this route as the latter restriction seems prohibitive to take full advantage of the performances of the reduced basis method. 
\end{remark}
	
We now use the exponential decay of the reduced basis error for $w_h$ obtained in Lemma \ref{lem:error_RB_w} to estimate the error for $u_{h,k}$ defined in \eqref{e:Es}.
\begin{lemma} \label{lem:E1}
Let $h$ and $k$ be the finite element and sinc quadrature parameters.
Let $r\in[0,1]$. For  $n \geq 1$, we have
\begin{equation} \label{eqn:exp_error_RB_u}
\begin{split}
&\|u_{h,k}(s)-u_{h,k}^n(s)\|_{\Hr} \\
& \qquad \leq \frac{\sin(s\pi)}{(1-s)\pi}C_P^{1-r}\gamma_s^{-1} C_1 e^{-C_2(h)n}\left(e^{(1-s)N_s k}-e^{-(1-s)M_sk}\right)\|f\|,
\end{split}
\end{equation}
where $C_1$ and $C_2(h)$ are the constants  in \eqref{eqn:exp_error_RB_w}.
\end{lemma}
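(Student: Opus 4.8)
The plan is to start from the already-established bound \eqref{e:Es}, which reduces the task to two pieces: controlling the reduced basis errors $\|w_h(y_l)-w_h^n(y_l)\|_{\Hr}$ at the sinc nodes $y_l=lk$, and summing the exponential weights $e^{(1-s)y_l}$.

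First I would pass from the $\Hr$ norm to the $H^1_0(\Omega)$ norm. Since $r\in[0,1]$ and $\Hr=(L^2(\Omega),H^1_0(\Omega))_r$, the interpolation inequality gives $\|v\|_{\Hr}\le \|v\|^{1-r}\|v\|_{H^1_0(\Omega)}^{r}$ for $v\in H^1_0(\Omega)$, and combining this with the Poincar\'e inequality \eqref{def:Poincare} in the form $\|v\|\le C_P\|\nabla v\|$ yields $\|v\|_{\Hr}\le C_P^{1-r}\|\nabla v\|$. Applying this with $v=w_h(y_l)-w_h^n(y_l)$, and noting that every quadrature node $y_l$ belongs to the parametric (training) domain $\I_s=[-M_sk,N_sk]$ while $w_h^n(y_l)=P_{\Vn}w_h(y_l)$, Lemma~\ref{lem:error_RB_w} supplies the uniform estimate $\|\nabla(w_h(y_l)-w_h^n(y_l))\|\le \gamma_s^{-1}C_1e^{-C_2(h)n}\|f\|$ for all $l=-M_s,\dots,N_s$. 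Inserting this into \eqref{e:Es} and pulling out every factor independent of $l$ leaves
\begin{equation*}
\|u_{h,k}(s)-u_{h,k}^n(s)\|_{\Hr}\le \frac{\sin(s\pi)}{\pi}\,C_P^{1-r}\gamma_s^{-1}C_1e^{-C_2(h)n}\|f\|\;\cdot\;k\sum_{l=-M_s}^{N_s}e^{(1-s)y_l}.
\end{equation*}

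It then remains to estimate the weighted sum $k\sum_{l=-M_s}^{N_s}e^{(1-s)lk}$. Since $y\mapsto e^{(1-s)y}$ is increasing, comparing this sum with the integral $\int_{-M_sk}^{N_sk}e^{(1-s)y}\,dy$ (equivalently, evaluating the geometric series and using $e^{(1-s)k}-1\ge(1-s)k$) bounds it by $\tfrac{1}{1-s}\big(e^{(1-s)N_sk}-e^{-(1-s)M_sk}\big)$, which is exactly the factor appearing in \eqref{eqn:exp_error_RB_u}. Substituting this into the previous display finishes the proof.

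No deep obstacle is expected: the statement is essentially an assembly of Lemma~\ref{lem:error_RB_w} with the interpolation/Poincar\'e inequality and an elementary geometric sum. The step requiring the most care is the last one, where the closed form of the geometric series naturally produces $e^{(1-s)(N_s+1)k}$ and one must either absorb the harmless factor $e^{(1-s)k}$ (which tends to $1$ as $k\to0$) or argue via the integral comparison to recover the stated exponent $e^{(1-s)N_sk}$; it is also worth recording explicitly that the bound of Lemma~\ref{lem:error_RB_w} is uniform over $\I_s$, so the same estimate applies simultaneously at every sinc node with no extra work at the individual parameter values.
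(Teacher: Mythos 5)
Your proof is correct and follows essentially the same route as the paper: interpolation between $L^2(\Omega)$ and $H^1_0(\Omega)$ plus Poincar\'e to reduce the $\Hr$ norm to $C_P^{1-r}\|\nabla(\cdot)\|$, the uniform bound of Lemma~\ref{lem:error_RB_w} at every sinc node, and a sum-to-integral comparison producing the factor $\tfrac{1}{1-s}\bigl(e^{(1-s)N_sk}-e^{-(1-s)M_sk}\bigr)$. Your closing remark about the last quadrature node (the left-endpoint comparison naturally yields the upper limit $(N_s+1)k$, i.e.\ a harmless extra factor $e^{(1-s)k}$) is a point the paper's own proof glosses over, so handling it explicitly is a small improvement rather than a deviation.
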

\begin{proof}
Because $\Hr$ are interpolation spaces between $L^2(\Omega)$ and $H^1_0(\Omega)$, see \eqref{def:Hr}, the Poincar\'e inequality \eqref{def:Poincare} yields
\begin{equation*}
\begin{split}
\|w_h(y)-w_h^n(y)\|_{\Hr}& \leq \|w_h(y)-w_h^n(y)\|^{1-r} \| \nabla(w_h(y)-w_h^n(y)) \|^r \\
&\leq C_P^{1-r}\|\nabla(w_h(y)-w_h^n(y))\|.
\end{split}
\end{equation*}
Therefore, using the estimate \eqref{e:Es} for the error and invoking Lemma~\ref{lem:error_RB_w}, we  get
\begin{equation*}
\begin{split}
&\|u_{h,k}(s)-u_{h,k}^n(s)\|_{\Hr} \\
& \qquad \leq  \frac{k\sin(s\pi)}{\pi}C_P^{1-r} \gamma_s^{-1} C_1 e^{-C_2(h) n}\|f\|\sum_{l=-M_sk}^{N_s k}e^{(1-s)y_l} \\
& \qquad\leq  \frac{\sin(s\pi)}{\pi}C_P^{1-r}\gamma_s^{-1} C_1 e^{-C_2(h)n}\|f\| \int_{-M_sk}^{N_sk}e^{(1-s)y}dy \\
& \qquad\leq  \frac{\sin(s\pi)}{(1-s)\pi}C_P^{1-r}\gamma_s^{-1} C_1 e^{-C_2(h)n}\left(e^{(1-s)N_s k}-e^{-(1-s)M_sk}\right)\|f\|,
\end{split}
\end{equation*}	
which is the claimed estimate.
\end{proof}

\begin{remark} \label{rem:expo_decay}
From \eqref{eqn:cst_C2}, we see that the constant $C_2(h)$ that appears in \eqref{eqn:exp_error_RB_w} and \eqref{eqn:exp_error_RB_u} tends to $0$ as $h$ tends to $0$. In other words, the reduced basis performances deteriorate as the target accuracy $\varepsilon$ tends to $0$. This phenomenon is observed in the numerical experiments reported in Figure \ref{fig:err_u_wrt_h} of Section \ref{sec:numres}.
\end{remark}
	
Using Lemma \ref{lem:E1}, we directly derive the following result providing a sufficient condition on the dimension $n$ of the reduced space to achieve an error under a specified tolerance $\delta>0$.
	
\begin{thm} \label{cor:err_Ds1}(offline construction of the reduced space)
Let $\varepsilon > 0$ be a given tolerance.
Assume that $h$ and $k$ are chosen so that \eqref{eqn:scaling_h_k} holds and that the reduced basis space is constructed such that \eqref{e:eps_W} holds. 
Then, for any $\delta \geq \varepsilon$ we have
$$
\|u_{h,k}(s)-u_{h,k}^n(s)\|_{\Hr}  \leq \delta \| f\|
$$
provided 
\begin{equation} \label{eqn:n_star}
n \approx \ln( C \delta \varepsilon^{\frac{2-s}{s}}) \ln(\varepsilon^{1/\alpha^*}),
\end{equation}
where $C$ is a constant only depending on $s$ and $r$ and $\alpha^*$ is as in Theorem~\ref{thm:FE_error}.
In particular
$$
n \approx  \ln(\varepsilon)^2
$$
when $\delta=\varepsilon$.
\end{thm}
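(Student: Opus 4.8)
The plan is to read this statement off directly from Lemma~\ref{lem:E1}, whose estimate \eqref{eqn:exp_error_RB_u} is already a bound, valid for every $n\ge1$, for the reduced basis approximation built on the greedy space $\Vn$. I would insert into the right-hand side of \eqref{eqn:exp_error_RB_u} the scaling \eqref{eqn:scaling_h_k}, which ties the discretization parameters $h$ and $k$ to the tolerance $\varepsilon$, and then solve the resulting inequality for $n$. Since \eqref{eqn:exp_error_RB_u} has the shape $(\text{constant})\cdot e^{-C_2(h)n}\|f\|$, the whole task is to express both the constant and $C_2(h)$ as powers of $\varepsilon$.

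First I would bound the $s$-dependent factors in \eqref{eqn:exp_error_RB_u}. Dropping the ceilings in \eqref{def:NsMs} changes $e^{(1-s)N_sk}$ and $e^{N_sk}$ only by the harmless factor $e^{k}$ (and $k\to0$), while $(1-s)M_sk\ge\pi^2/k$ forces $e^{-(1-s)M_sk}\le e^{-\pi^2/k}=\mathcal O(\varepsilon)$, which is negligible in the difference $e^{(1-s)N_sk}-e^{-(1-s)M_sk}\ge0$. Using the sinc half of \eqref{eqn:scaling_h_k} in the form $e^{\pi^2/k}=C_{\textrm{SINC}}/\varepsilon$ gives $e^{N_sk}\lesssim\varepsilon^{-1/s}$ and $e^{(1-s)N_sk}\lesssim\varepsilon^{-(1-s)/s}$, and feeding the first bound into \eqref{e:gamma_s} yields $\gamma_s^{-1}=1+C_P^2e^{N_sk}\lesssim\varepsilon^{-1/s}$. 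Absorbing the prefactor $\sin(s\pi)C_P^{1-r}C_1/((1-s)\pi)$, Lemma~\ref{lem:E1} then reduces to $\|u_{h,k}(s)-u_{h,k}^n(s)\|_{\Hr}\le C\,\varepsilon^{-(2-s)/s}\,e^{-C_2(h)n}\|f\|$ with $C=C(s,r)$. This is the step where the exponent $(2-s)/s$ appears, through the splitting $\tfrac{2-s}{s}=\tfrac1s+\tfrac{1-s}{s}$: the factor $\varepsilon^{-1/s}$ is the blow-up of the weak greedy constant $\gamma_s^{-1}$ on the long parameter interval, and $\varepsilon^{-(1-s)/s}$ is the growth of the sinc sum.

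It then suffices to ask for $C\,\varepsilon^{-(2-s)/s}e^{-C_2(h)n}\le\delta$, i.e. $n\ge C_2(h)^{-1}\ln\!\big(C\varepsilon^{-(2-s)/s}/\delta\big)$, the logarithm being positive for $\varepsilon$ small. To finish I would substitute $C_2(h)$ from \eqref{eqn:cst_C2}, namely $C_2(h)^{-1}\approx\ln(C_P^2C_I^2h^{-2})$, together with the finite-element half of \eqref{eqn:scaling_h_k}, $h^{-2}=(C_{\textrm{FEM}}/\varepsilon)^{1/\alpha^*}$, which yields $C_2(h)^{-1}\approx\tfrac1{\alpha^*}\ln(1/\varepsilon)=-\ln(\varepsilon^{1/\alpha^*})$ as $\varepsilon\to0$. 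Rewriting $\ln(C\varepsilon^{-(2-s)/s}/\delta)=-\ln(C^{-1}\delta\,\varepsilon^{(2-s)/s})$, the condition becomes $n\approx\ln(C\delta\,\varepsilon^{(2-s)/s})\,\ln(\varepsilon^{1/\alpha^*})$ (a product of two negative quantities), which is precisely \eqref{eqn:n_star}; setting $\delta=\varepsilon$ and using $\ln(C\varepsilon^{1+(2-s)/s})=\ln(C\varepsilon^{2/s})\approx\tfrac2s\ln\varepsilon$ gives $n\approx\tfrac{2}{s\alpha^*}(\ln\varepsilon)^2\approx(\ln\varepsilon)^2$.

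The argument is elementary once the identifications $e^{\pi^2/k}\sim\varepsilon^{-1}$ and $h^{-2}\sim\varepsilon^{-1/\alpha^*}$ from \eqref{eqn:scaling_h_k} are in hand, and I do not expect a genuine obstacle. The points needing care are purely bookkeeping: confining the entire $s$-dependence to the constant $C$ (the statement is for fixed $s$), and checking that all additive constants ($\ln(C_P^2C_I^2)$, $\ln C_{\textrm{FEM}}$, $\ln C_{\textrm{SINC}}$) as well as the ceilings in \eqref{def:NsMs} are lower order as $\varepsilon\to0$ and hence absorbed into the $\approx$ signs in \eqref{eqn:cst_C2} and \eqref{eqn:n_star}.
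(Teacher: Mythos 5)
Your proposal is correct and follows the same route as the paper: the paper's own proof is precisely the one-line observation that the claim "directly follows from Lemma~\ref{lem:E1} together with \eqref{eqn:scaling_h_k}", and your argument simply carries out that computation in detail (bounding $\gamma_s^{-1}$ and $e^{(1-s)N_sk}$ by powers of $\varepsilon$ via the sinc scaling, expressing $C_2(h)^{-1}$ via the finite element scaling, and solving for $n$). The bookkeeping you supply is consistent with \eqref{eqn:n_star} and the $\delta=\varepsilon$ case.
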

\begin{proof}
The claims directly follow from Lemma~\ref{lem:E1} together with \eqref{eqn:scaling_h_k}.
\end{proof}

\subsection{Universal Reduced Basis Space} \label{sec:universal}
In the previous section we constructed a reduced basis space $\Vn$ to approximate $u_{h,k}(s)$ for a fixed $s \in (0,1)$. We now show that it is possible to take real advantage of the \emph{offline} work and construct reduced basis spaces approximating the map $s \mapsto u_{h,k}(s)$ for $s \in [s_{\min},s_{\max}]$, with $0<s_{\min} \leq s_{\max} <1$ fixed.

To see this, it suffices to adjust the constant depending on $s$ as follows. First, we let 
\begin{equation} \label{def:NM}
M:=\left\lceil \frac{\pi^2}{(1-s_{\max})k^2} \right\rceil \quad \mbox{and} \quad N:=\left\lceil \frac{\pi^2}{s_{\min}k^2} \right\rceil.
\end{equation}
Then, we define the domain $\I:=[-Mk,Nk]$ containing $\I_{s}$ for all $s \in [s_{\min},s_{\max}]$ and, similarly to \eqref{e:gamma_s}, we introduce the parameter
\begin{equation}\label{e:gamma}
\gamma:=\left(\max_{y\in \I} (1+C_Pe^y)\right)^{-1}= (1+C_Pe^{Nk})^{-1}.
\end{equation}
Finally, for all $y\in\I$ we approximate $w_h(y)$ by $w_h^n(y)=P_{\Vn}w_h(y)$, where the reduced basis space $\Vn$ is constructed as detailed in Section~\ref{s:construction} upon replacing $M_s$, $N_s$, $\I_s$ and $\gamma_s$ by $M$, $N$, $\I$ and $\gamma$, respectively. 
 
With this uniform construction, we directly obtain the universal version of Lemma~\ref{lem:E1} and Theorem~\ref{cor:err_Ds1}.
\begin{lemma} \label{lem:E2}
Let $h$ and $k$ be the finite element and sinc quadrature parameters. Let $r\in[0,1]$. For  $n \geq 1$ and any $s\in[s_{\min},s_{\max}]$ we have
\begin{equation*}
\|u_{h,k}(s)-u_{h,k}^n(s)\|_{\Hr} \leq \frac{\sin(s\pi)}{(1-s)\pi}C_P^{1-r}\gamma^{-1} C_1 e^{-C_2(h)n}\left(e^{(1-s)N k}-e^{-(1-s)Mk}\right)\|f\|,
\end{equation*}
where $C_1$ and $C_2(h)$ are the constants  in \eqref{eqn:exp_error_RB_w}.
\end{lemma}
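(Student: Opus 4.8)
The plan is to reproduce the proof of Lemma~\ref{lem:E1} essentially verbatim, replacing the $s$-dependent objects $\I_s$, $\gamma_s$, $M_s$, $N_s$ by their universal counterparts $\I$, $\gamma$, $M$, $N$. The starting point is the elementary inclusion $\I_s\subset\I$, valid for every $s\in[s_{\min},s_{\max}]$: since $1-s\geq 1-s_{\max}$ and $s\geq s_{\min}$, the definitions \eqref{def:NsMs} and \eqref{def:NM} give $M_s\leq M$ and $N_s\leq N$, so that $[-M_sk,N_sk]\subset[-Mk,Nk]$. Hence the reduced basis space $\Vn$ built by the greedy procedure on $\I$ controls, in particular, the error at every sinc node $y_l\in\I_s$.

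First I would invoke Lemma~\ref{lem:error_RB_w} for $\Vn$ constructed on $\I$. The point worth checking is that the exponential rate $C_2(h)$ in that lemma is produced from the rational (Zolotar\"ev) approximation on the interval $[C_P^{-2},C_I^2h^{-2}]$ coming from the eigenvalue bounds \eqref{eqn:mu}, which do not involve the parameter domain; only the weak greedy constant records its length. Therefore Lemma~\ref{lem:error_RB_w} holds with $\gamma$ (see \eqref{e:gamma}) in place of $\gamma_s$, i.e.
\begin{equation*}
\sup_{y\in\I}\|\nabla(w_h(y)-P_{\Vn}w_h(y))\|\leq \gamma^{-1}C_1e^{-C_2(h)n}\|f\|,
\end{equation*}
with the very same $C_1$ and $C_2(h)$ as in \eqref{eqn:exp_error_RB_w}. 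This is the only step that is not pure bookkeeping, and it is precisely what makes the $s$-independent space viable: enlarging the parameter interval degrades only the prefactor $\gamma^{-1}$ and the length of the integration interval below, not the exponential rate.

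Then I would run the same chain of inequalities as in Lemma~\ref{lem:E1}. Interpolation between $L^2(\Omega)$ and $H^1_0(\Omega)$ together with the Poincar\'e inequality \eqref{def:Poincare} gives $\|w_h(y)-w_h^n(y)\|_{\Hr}\leq C_P^{1-r}\|\nabla(w_h(y)-w_h^n(y))\|$ for all $y\in\I$, hence for all nodes $y_l\in\I_s$. Inserting this and the bound above into \eqref{e:Es}, and bounding the resulting geometric sum by the integral $\int_{-M_sk}^{N_sk}e^{(1-s)y}\,dy=\frac{1}{1-s}(e^{(1-s)N_sk}-e^{-(1-s)M_sk})$ exactly as in the proof of Lemma~\ref{lem:E1}, one arrives at
\begin{equation*}
\|u_{h,k}(s)-u_{h,k}^n(s)\|_{\Hr}\leq\frac{\sin(s\pi)}{(1-s)\pi}C_P^{1-r}\gamma^{-1}C_1e^{-C_2(h)n}\left(e^{(1-s)N_sk}-e^{-(1-s)M_sk}\right)\|f\|.
\end{equation*}
Finally, since $N_s\leq N$ and $M_s\leq M$ one has $e^{(1-s)N_sk}\leq e^{(1-s)Nk}$ and $-e^{-(1-s)M_sk}\leq -e^{-(1-s)Mk}$ — both terms moving in the favorable direction — so the parenthesis is bounded by $e^{(1-s)Nk}-e^{-(1-s)Mk}$, which yields the stated estimate. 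I do not expect any genuine obstacle beyond the verification noted in the second paragraph.
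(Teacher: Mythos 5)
Your proof is correct and follows exactly the route the paper intends: the paper gives no separate argument for Lemma~\ref{lem:E2}, stating only that it is obtained directly from the uniform construction by replacing $M_s$, $N_s$, $\I_s$, $\gamma_s$ with $M$, $N$, $\I$, $\gamma$ in the proof of Lemma~\ref{lem:E1}, which is what you carry out. Your extra verification that the Zolotar\"ev rate $C_2(h)$ depends only on the spectral interval $[C_P^{-2},C_I^2h^{-2}]$ (so enlarging the parameter domain affects only $\gamma^{-1}$ and the integration interval) is precisely the point that makes the universal space work, and it is consistent with \eqref{eqn:cst_C2}.
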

\begin{thm} \label{cor:err_D}(offline construction of the universal reduced space)
Let $\varepsilon > 0$ be a given tolerance.
Assume that $h$ and $k$ are chosen so that \eqref{eqn:scaling_h_k} holds and that the reduced basis space is constructed such that \eqref{e:eps_W} holds. 
Then, for any $\delta \geq \varepsilon$ we have
\begin{equation*}
\max_{s\in[s_{\min},s_{\max}]}\|u_{h,k}(s)-u_{h,k}^n(s)\|_{\Hr}  \leq \delta \|f\|
\end{equation*}
provided 
\begin{equation} \label{eqn:n_star_univ}
n \approx \ln( C \delta \varepsilon^{\frac{2-s_{\min}}{s_{\min}}}) \ln(\varepsilon^{1/\alpha^*}),
\end{equation}
where $C$ is a constant only depending on $s_{\min}$, $s_{\max}$ and $r$, and $\alpha^*$ is as in Theorem~\ref{thm:FE_error}. In particular
\begin{equation*}
	n \approx  \ln(\varepsilon)^2
\end{equation*}
when $\delta=\varepsilon$.
\end{thm}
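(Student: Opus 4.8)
The plan is to mirror the proof of Theorem~\ref{cor:err_Ds1} verbatim, the only adjustment being that the $s$-dependent quantities $M_s$, $N_s$, $\I_s$, $\gamma_s$ are everywhere replaced by the $s$-independent quantities $M$, $N$, $\I$, $\gamma$ defined in \eqref{def:NM}--\eqref{e:gamma}, and then taking the worst case over $s\in[s_{\min},s_{\max}]$ at the very end. First I would invoke Lemma~\ref{lem:E2}: for every $s\in[s_{\min},s_{\max}]$,
\begin{equation*}
\|u_{h,k}(s)-u_{h,k}^n(s)\|_{\Hr} \leq \frac{\sin(s\pi)}{(1-s)\pi}C_P^{1-r}\gamma^{-1} C_1 e^{-C_2(h)n}\left(e^{(1-s)N k}-e^{-(1-s)Mk}\right)\|f\|.
\end{equation*}
Since this bound already holds uniformly in $s$, taking $\max_{s}$ on the left costs nothing; I only need to control the $s$-dependent prefactor on the right by an $s$-independent expression. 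The prefactor is continuous in $s$ on the compact interval, $\sin(s\pi)\leq 1$, $(1-s)^{-1}\leq(1-s_{\max})^{-1}$, and $\gamma^{-1}=1+C_P e^{Nk}$ is already $s$-free. The only genuinely growing piece is $e^{(1-s)Nk}-e^{-(1-s)Mk}\leq e^{(1-s)Nk}\leq e^{Nk}$ (using $1-s\le 1$), so the entire right-hand side is $\lesssim \gamma^{-1}C_1 e^{-C_2(h)n}e^{Nk}\|f\|$ up to a constant depending only on $s_{\min}$, $s_{\max}$, $r$, $C_P$.

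Next I would plug in the scaling \eqref{eqn:scaling_h_k}. From $C_{\rm SINC}e^{-\pi^2/k}=\varepsilon$ we get $k\approx \big(\ln(\varepsilon^{-1})\big)^{-1}$ up to constants, hence $Nk\approx \pi^2/(s_{\min}k)\approx s_{\min}^{-1}\ln(\varepsilon^{-1})$, so that $e^{Nk}\approx \varepsilon^{-1/s_{\min}}$ and likewise $\gamma^{-1}=1+C_Pe^{Nk}\lesssim \varepsilon^{-1/s_{\min}}$. Therefore the product $\gamma^{-1}e^{Nk}\lesssim \varepsilon^{-2/s_{\min}}$, and the reduced-basis error is bounded by $C\,\varepsilon^{-2/s_{\min}}e^{-C_2(h)n}\|f\|$ for a constant $C=C(s_{\min},s_{\max},r)$. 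Imposing $C\varepsilon^{-2/s_{\min}}e^{-C_2(h)n}\leq\delta$ and solving for $n$ gives
\begin{equation*}
n \geq \frac{1}{C_2(h)}\Big(\ln(C\delta^{-1})+\tfrac{2}{s_{\min}}\ln(\varepsilon^{-1})\Big)\approx \ln(C^{-1}\delta^{-1}\varepsilon^{-2/s_{\min}})^{-1}\!\!\!,
\end{equation*}
wait — here I must be careful: from \eqref{eqn:cst_C2} and \eqref{eqn:scaling_h_k}, $C_2(h)^{-1}\approx \ln(C_P^2C_I^2h^{-2})\approx \ln(h^{-1})\approx \alpha^{*-1}\ln(\varepsilon^{-1})$ since $h^{2\alpha^*}\approx\varepsilon$, i.e. $C_2(h)^{-1}\approx \ln(\varepsilon^{1/\alpha^*})$. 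Multiplying the two logarithmic factors yields exactly $n\approx \ln\big(C\delta\,\varepsilon^{(2-s_{\min})/s_{\min}}\big)\ln(\varepsilon^{1/\alpha^*})$ after absorbing $\delta^{-1}\varepsilon^{-2/s_{\min}}$ into $\delta\varepsilon^{(2-s_{\min})/s_{\min}}$ inside the logarithm (noting the sign: $\ln(C\delta^{-1}\varepsilon^{-2/s_{\min}})=-\ln(C^{-1}\delta\varepsilon^{2/s_{\min}})$, and $\delta\varepsilon^{2/s_{\min}}=\delta\varepsilon\cdot\varepsilon^{(2-s_{\min})/s_{\min}}$, with the extra $\varepsilon$ harmless inside the log), which is \eqref{eqn:n_star_univ}. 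The special case $\delta=\varepsilon$ then gives both factors $\approx\ln(\varepsilon^{-1})$ up to constants, hence $n\approx\ln(\varepsilon)^2$.

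The only mildly delicate point is bookkeeping the constants and the signs inside the logarithm so that the final expression matches \eqref{eqn:n_star_univ} exactly rather than merely up to an additive constant inside the log; since the statement is phrased with "$\approx$" and with $C$ allowed to depend on $s_{\min},s_{\max},r$, this is purely cosmetic, and no new ideas beyond those in the proof of Theorem~\ref{cor:err_Ds1} are required. I would simply write: "The proof is identical to that of Theorem~\ref{cor:err_Ds1}, using Lemma~\ref{lem:E2} in place of Lemma~\ref{lem:E1}, bounding $\sin(s\pi)/((1-s)\pi)\leq 1/((1-s_{\max})\pi)$ and $e^{(1-s)Nk}-e^{-(1-s)Mk}\leq e^{Nk}$ uniformly in $s\in[s_{\min},s_{\max}]$, and then invoking \eqref{eqn:scaling_h_k}."
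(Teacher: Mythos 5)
Your proposal is correct and follows essentially the same route as the paper, which proves this theorem exactly as Theorem~\ref{cor:err_Ds1}: apply Lemma~\ref{lem:E2}, use the scaling \eqref{eqn:scaling_h_k} to write $e^{Nk}\approx\varepsilon^{-1/s_{\min}}$, $\gamma^{-1}\lesssim\varepsilon^{-1/s_{\min}}$ and $C_2(h)^{-1}\approx|\ln(\varepsilon^{1/\alpha^*})|$, and solve for $n$. The only (cosmetic) deviation is your bound $e^{(1-s)Nk}\leq e^{Nk}$, which yields the exponent $2/s_{\min}$ instead of the paper's $(2-s_{\min})/s_{\min}$ obtained by keeping the factor $e^{(1-s)Nk}\approx\varepsilon^{-(1-s_{\min})/s_{\min}}$ at the worst case $s=s_{\min}$; as you observe, this discrepancy is absorbed by the ``$\approx$'' in \eqref{eqn:n_star_univ}.
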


\section{Numerical Experiments} \label{sec:numres}
	
We present numerical results to illustrate the performances of the reduced basis approach analyzed in the previous section. Since the focus of this paper is on the reduced basis approximation, the finite element meshsize $h$ and the sinc quadrature parameter $k$ are chosen sufficiently small not to influence the total error, unless otherwise specified. We refer to \cite{BP15,BLP18} for an extensive numerical study  on the influence of the discretization parameters $h$ and $k$. The space $\Vn$ is built using a weak greedy algorithm on $[-Mk,Nk]$, starting with the \emph{snapshot} $w_h(0)$. Moreover, the \emph{training set} $\Theta$ consists of $10000$ uniformly distributed points in $[-Mk,Nk]$. Finally, we set $f=1$ in all the numerical examples.
	
\subsection{1D example} \label{sec:numres_1D}
	
We consider the case $\Omega=(0,1)$. The subdivision $\Th$ of $\Omega$ consists of a uniform partition of $[0,1]$ with subintervals of length $h=2^{-12}$. The sinc quadrature parameter is fixed to $k=0.5$ and the fractional power $s$ varies from $s_{\min}=0.1$ to $s_{\max}=0.9$. In this setting, we have $N=M=395$ and $[-Mk,Nk]=[-197.5,197.5]$.
	
\subsubsection{Reduced basis error for $w_h$}

We provide in Figure \ref{fig:err_w}-left the evolution of 
\begin{equation*}
e_w(n):=\sup_{y_l \in \Theta} \|w_h(y_l)-P_{\Vn}w_h(y_l)\|_{H_0^1(\Omega)}
\end{equation*}
versus $n\geq 1$ as indicator of the error
\begin{equation*}
\sup_{y \in [-Mk,Nk]} \|w_h(y_l)-P_{\Vn}w_h(y_l)\|_{H_0^1(\Omega)}.
\end{equation*}
The observed exponential decay matches the estimate of Lemma~\ref{lem:error_RB_w}.
	
Moreover, Figure \ref{fig:err_w}-right reports the values of the selected parameters $y^n$ by the weak greedy procedure. We observe that except for $y^2$, they are all located in the interval $[0,20]$. This behavior can be in part explained by the estimates provided in Lemma~\ref{lem:tmp_res}, indicating the robustness of $w_h(y)$ for small values of $y$ and the smallness of $\|w_h(y)\|$ for $y$ large. The fact that no negative $y$ is selected is also attributable to the choice of the initial snapshot, namely $w_h(0)$, which already provides a good approximation of $w_h(y)$ for $y\leq 0$. We mention that similar results are obtained when changing the range for $s$, for instance setting $s_{\min}=0.01$ and $s_{\max}=0.99$.

\begin{figure}[htbp]
\centering
\includegraphics[width=0.45\textwidth]{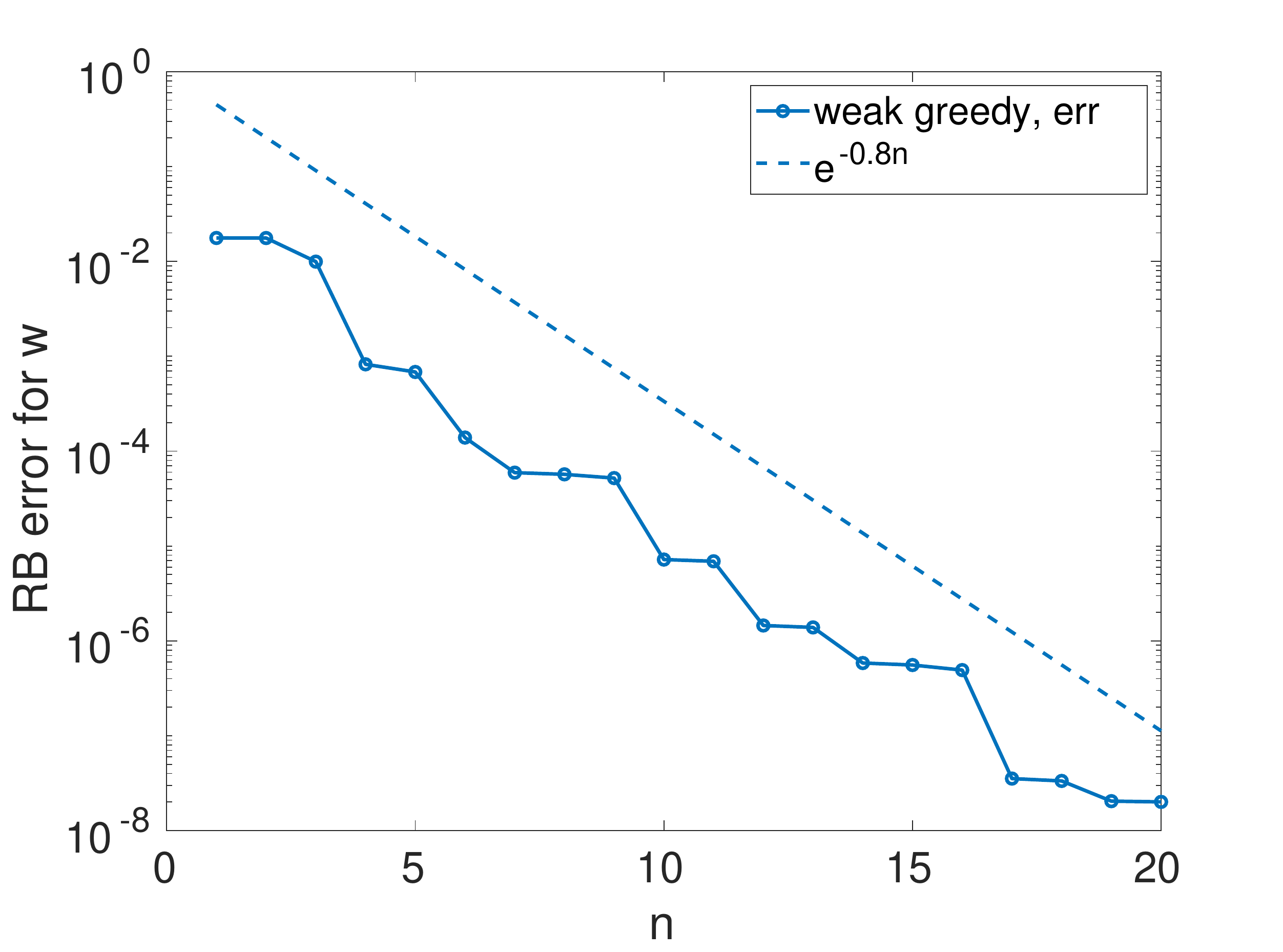}
\includegraphics[width=0.45\textwidth]{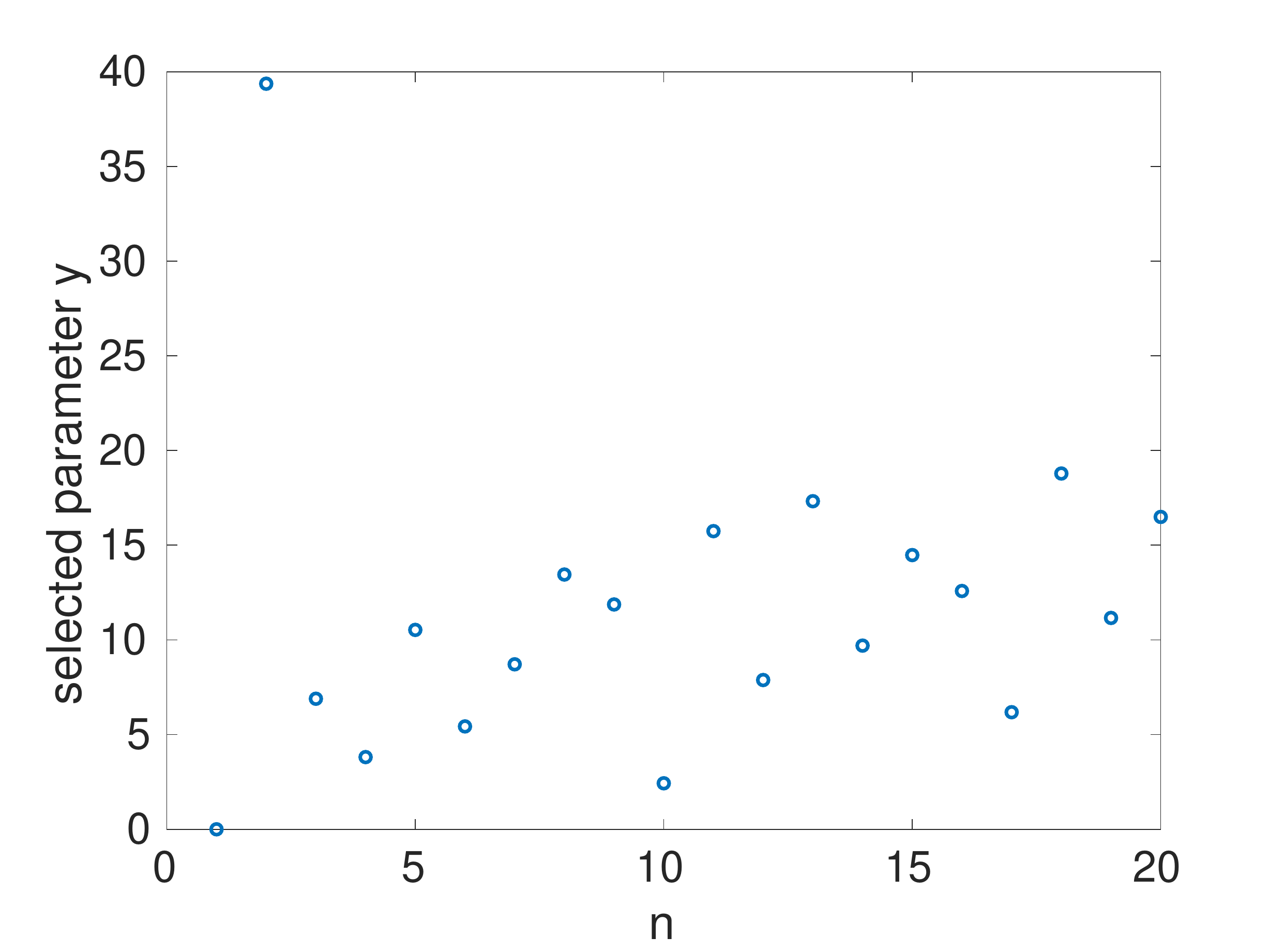}
\caption{Left: reduced basis error $e_w(n)$ versus $n$. Right: parameters $y^n$ selected by the weak greedy procedure during the construction of the reduced basis space.} \label{fig:err_w}
\end{figure}

We comment on the use of an \emph{a posteriori} error estimate (weak greedy) in place of the \emph{true} error (greedy), see \eqref{eqn:equiv_err_est}. For this, we compare in Figure \ref{fig:err_w_comparison} the performance of both algorithms and we can conclude that very little efficiency is lost in using the computable \emph{a posteriori} error estimator.

\begin{figure}[htbp]
\centering
\includegraphics[width=0.45\textwidth]{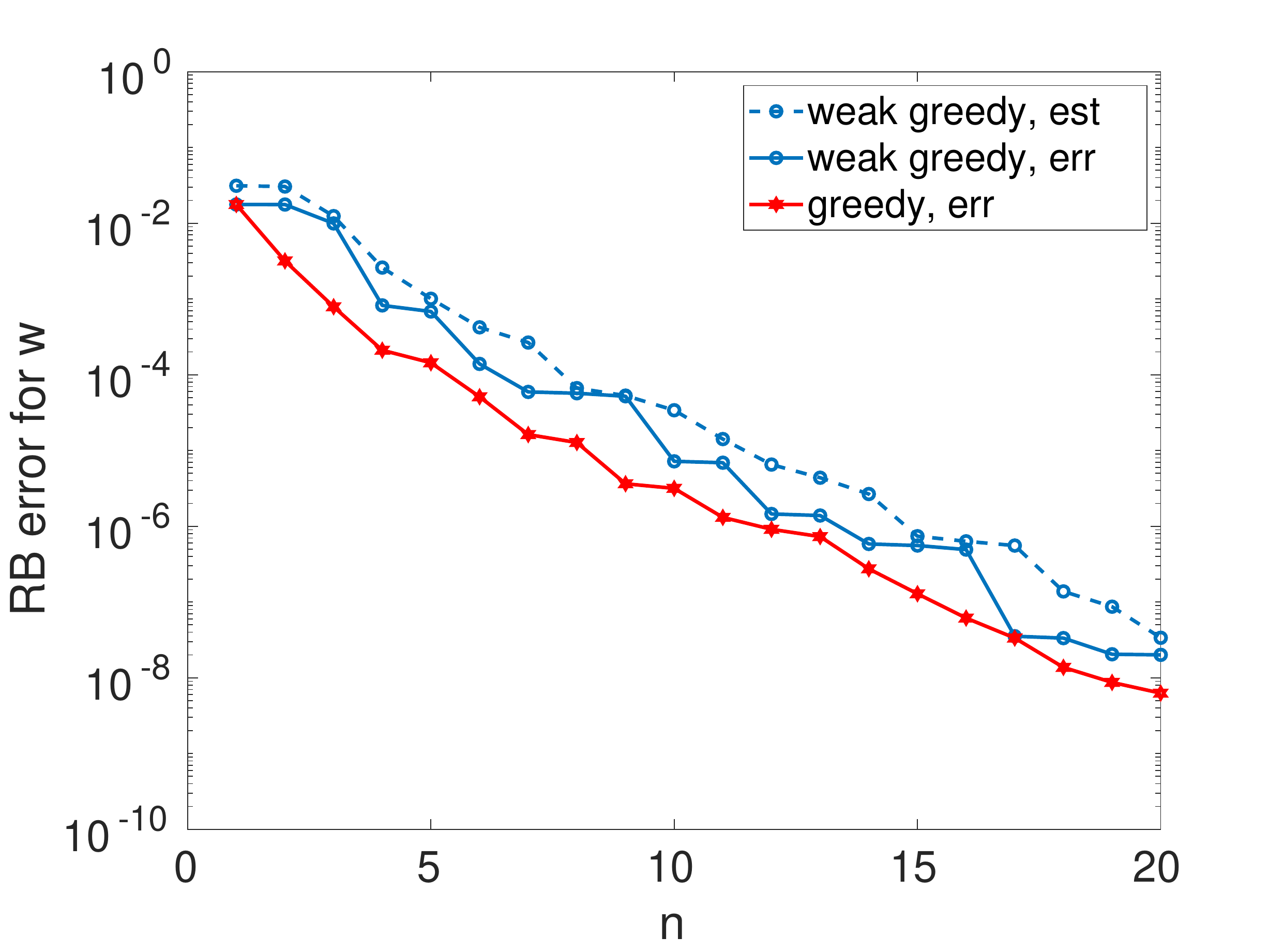}
\includegraphics[width=0.45\textwidth]{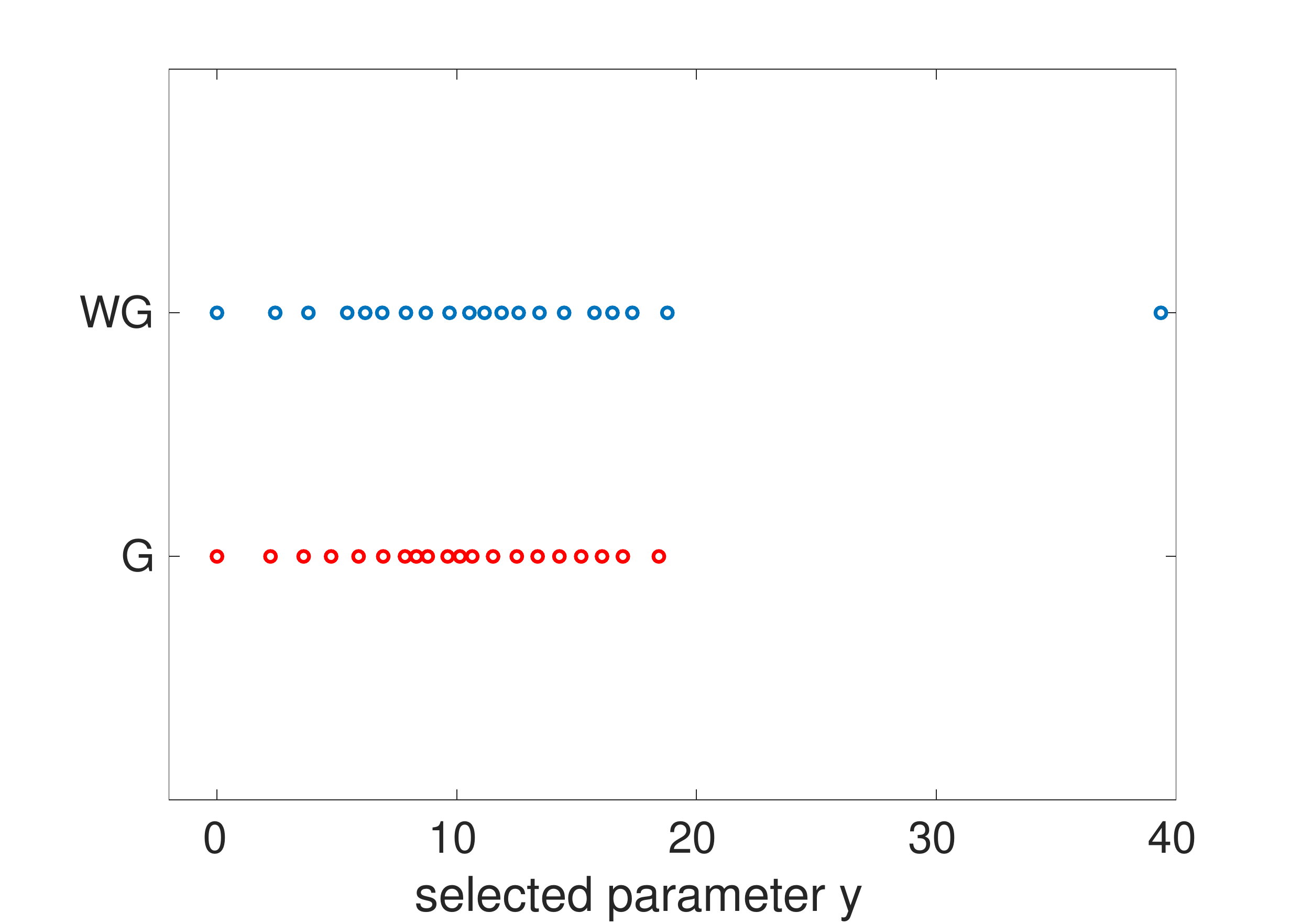}
\caption{Comparison of the greedy and weak greedy strategies. Left: error $e_w(n)$ associated with the greedy and weak greedy strategies. The dashed line represents the equivalent quantity $\max_{y\in\Theta}\|r_n(\cdot;y)\|_{\Vh'}$, see \eqref{eqn:equiv_err_est}. Right: selected parameters $y$ for both strategies.} \label{fig:err_w_comparison}
\end{figure}
	
\subsubsection{Reduced basis error for $u_{h,k}$} 

We now turn our attention to the approximation of $u_{h,k}(s)$ by $u_{h,k}^n(s)$. Figure \ref{fig:err_u} depicts the evolution of 
\begin{equation*}
e_{u(s)}(n):=\|u_{h,k}(s)-u_{h,k}^n(s)\|
\end{equation*}
for various values of fractional power. In agreement with Theorem~\ref{cor:err_D}, exponential decay is observed in all cases when using the universal reduced basis  space. Notice that in this experiment the sinc quadrature requires $440$ points for $s=0.1,0.9$, $190$ points  for $s=0.3,0.7$ and $159$ points for $s=0.5$ to guarantee a sinc quadrature error of the order $e^{-\pi^2/k}\approx 2.7\times 10^{-9}$ for $k=0.5$. A comparable reduced basis accuracy in the $L^2(\Omega)$ norm is achieved for only $n=20$ for $0.5\leq s\leq 0.9$.
\begin{figure}[htbp]
\centering
\includegraphics[width=0.45\textwidth]{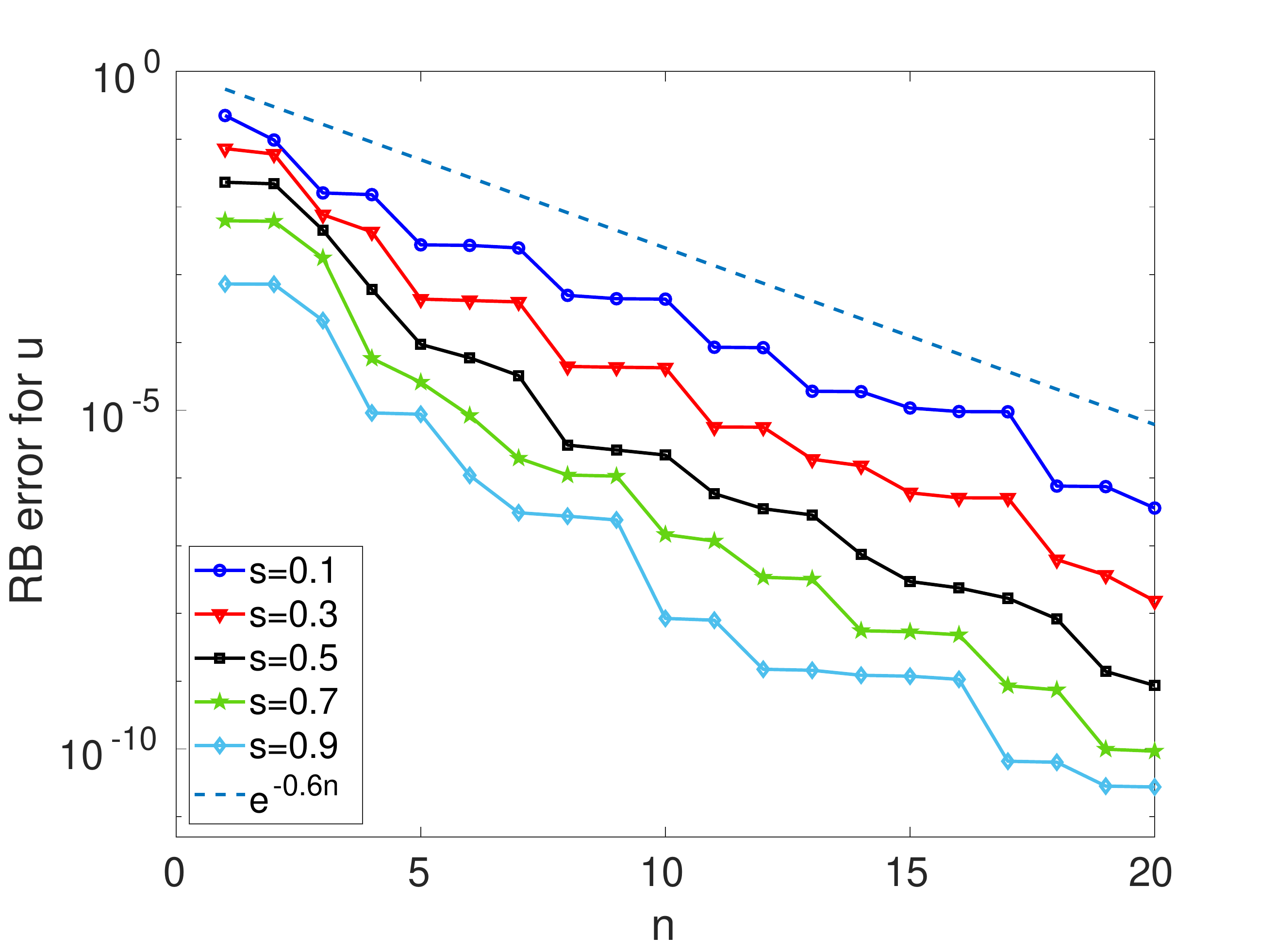}
\caption{Error $e_{u(s)}(n)$ with respect to $n$ for various values of $s$ in $[0.1,0.9]$. Exponential decay is observed in all cases using the universal reduced basis space.} \label{fig:err_u}	
\end{figure}
	
Finally, we study numerically the behavior of the constant $C_2(h)$ given by (\ref{eqn:cst_C2}). We set $s=0.1$ and consider a sequence of uniform partitions of $[0,1]$ with subintervals of length $h=2^{-k}$ for $k=6,8,10,12$. The reduced basis error $e_{u(0.1)}(n)$ versus $n$ is reported in Figure \ref{fig:err_u_wrt_h} for each finite element discretization. As predicted by Theorem~\ref{cor:err_D}, the exponential decay encoded in $C_2 = C_2(h)$ deteriorates as $h \to 0$. The $L^2(\Omega)$ norm of the error behaves like $e^{-1.4n}$ for $h=2^{-6}$ and $e^{-0.7n}$ for $h=2^{-12}$.
		
\begin{figure}[htbp]
\centering
\includegraphics[width=0.45\textwidth]{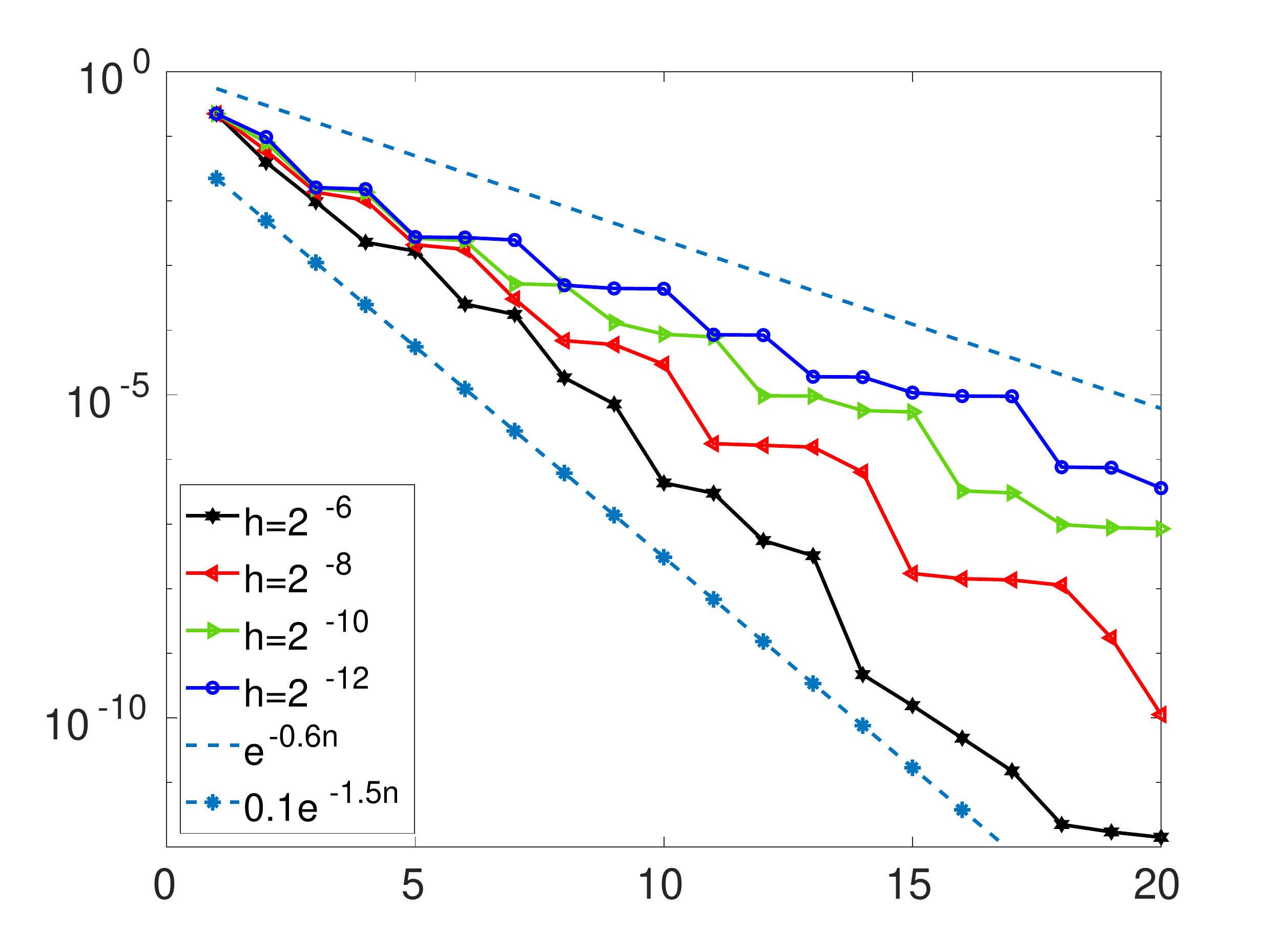}
\caption{Effect of the space discretization parameter $h$ in the exponential decay of the error $e_{u(0.1)}(n)$. In accordance with Theorem~\ref{cor:err_D}, the exponential decay coefficient deteriorates as $h$ decreases. } \label{fig:err_u_wrt_h}	
\end{figure}
	
\subsection{2D examples}
	
We now consider two dimensional domains: a square domain $\Omega=(0,1)^2$ and an L-shaped domain $\Omega = (0,1)^2\setminus ([0,0.5]\times[0.5,1])$. The space discretization consists of a Delaunay triangulation with $22968$ elements for $\Omega=(0,1)^2$ and $17190$ elements for $\Omega = (0,1)^2\setminus ([0,0.5]\times[0.5,1])$. In both cases, the elements in the triangulation have diameters between $0.005$ and $0.01$. All the other parameters are the same as in Section \ref{sec:numres_1D}. 
	
The evolution of the reduced basis error $e_{u(s)}(n)$ for different values of $s$ is reported in Figure \ref{fig:err_u_2D}. As for the one dimensional case, exponential decays are observed for all values of $s \in [0.1,0.9]$ and irrespectively of the shape of the domain.
		
\begin{figure}[htbp]
\centering
\includegraphics[width=0.45\textwidth]{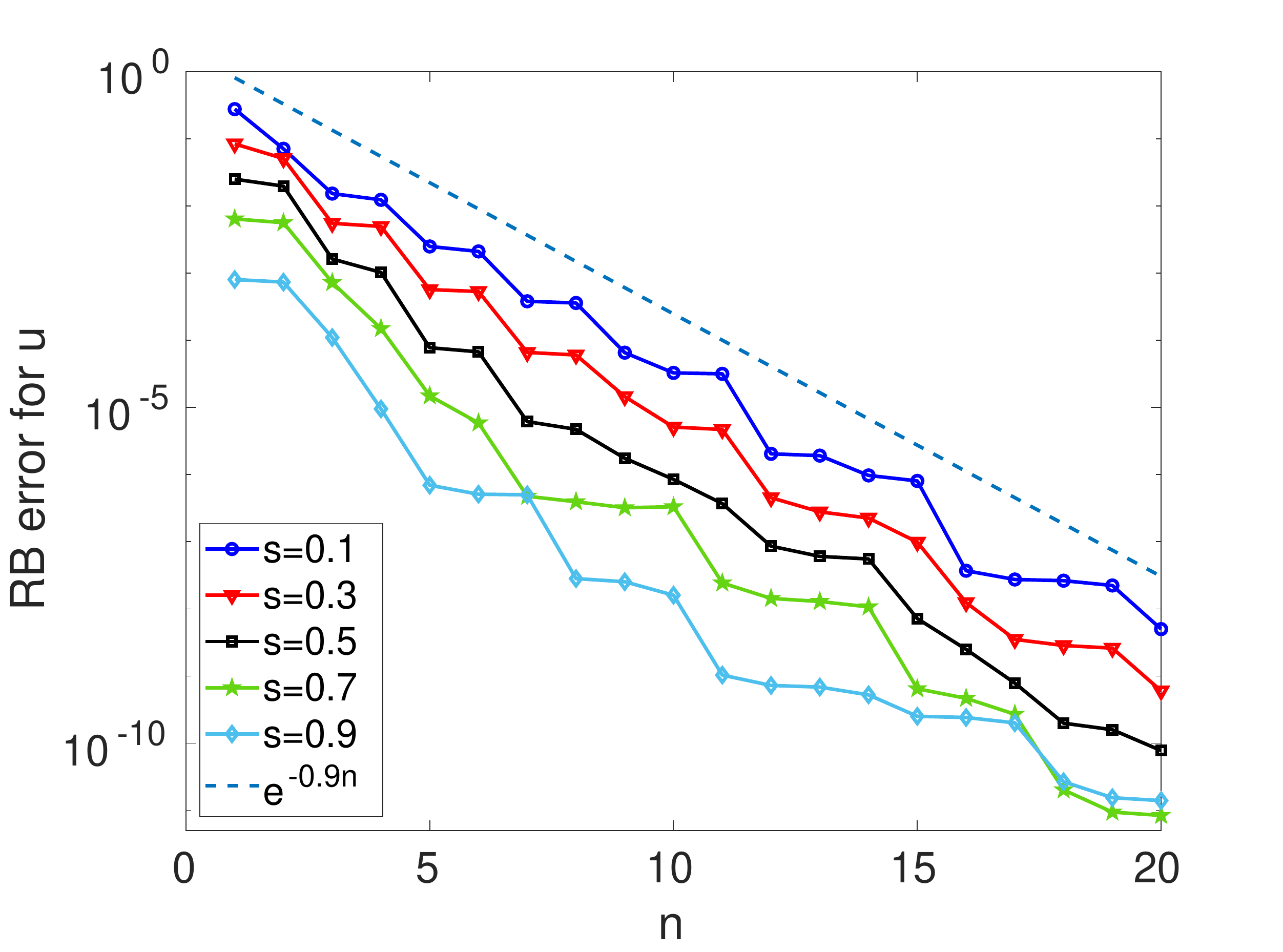}
\includegraphics[width=0.45\textwidth]{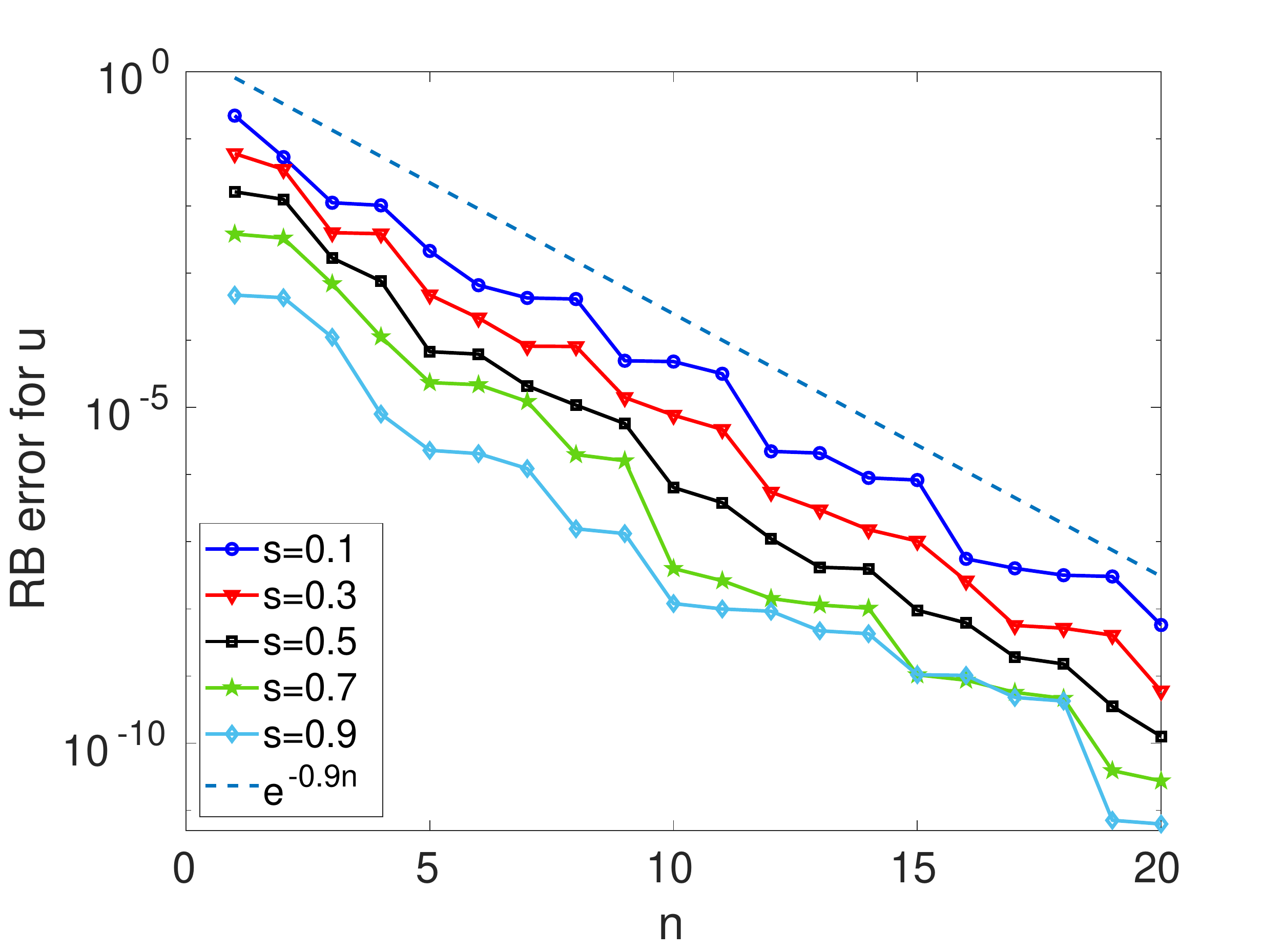}
\caption{Error $e_{u(s)}(n)$ with respect to $n$ for various values of $s$ in $[0.1,0.9]$. Left: unit square domain. Right: $L$-shape domain.} \label{fig:err_u_2D}
\end{figure}
	
\bibliographystyle{plain}
\bibliography{abrevjournal,bibliography.bib}

\begin{thebibliography}{10}

\bibitem{antil2016optimization}
H.~Antil and E.~Ot{\'a}rola.
\newblock A {FEM} for an optimal control problem of fractional powers of
  elliptic operators.
\newblock {\em SIAM J. Control Optim.}, 53(6):3432--3456, 2015.

\bibitem{AO18}
H.~Antil, E.~Ot{\'a}rola, and A.J. Salgado.
\newblock Optimization with respect to order in a fractional diffusion model:
  Analysis, approximation and algorithmic aspects.
\newblock {\em J. Sci. Comput.}, 77(1):204--224, 2018.

\bibitem{BBNOS18}
A.~Bonito, J.P. Borthagaray, R.H. Nochetto, E.~Ot{\'a}rola, and A.J. Salgado.
\newblock Numerical methods for fractional diffusion.
\newblock {\em Comput. Visual. Sci.}, 19(5):19--46, 2018.

\bibitem{BLP18}
A.~Bonito, W.~Lei, and J.E. Pasciak.
\newblock On sinc quadrature approximations of fractional powers of regularly
  accretive operators.
\newblock {\em J. Numer. Math.}, 2018.

\bibitem{BP15}
A.~Bonito and J.E. Pasciak.
\newblock Numerical approximation of fractional powers of elliptic operators.
\newblock {\em Math. Comp.}, 84(295):2083--2110, 2015.

\bibitem{BP17}
A.~Bonito and J.E. Pasciak.
\newblock Numerical approximation of fractional powers of regularly accretive
  operators.
\newblock {\em IMA J. Numer. Anal.}, 37(3):1245--1273, 2017.

\bibitem{bueno2014fractional}
A.~Bueno-Orovio, D.~Kay, V.~Grau, B.~Rodriguez, and K.~Burrage.
\newblock Fractional diffusion models of cardiac electrical propagation: role
  of structural heterogeneity in dispersion of repolarization.
\newblock {\em J. Royal Soc. Interface}, 11(97):20140352, 2014.

\bibitem{BMPPT12}
A.~Buffa, Y.~Maday, A.T. Patera, C.~Prud’homme, and G.~Turinici.
\newblock A priori convergence of the greedy algorithm for the parameterized
  reduced basis.
\newblock {\em ESAIM: Math. Model. Numer. Anal.}, 46(3):595--603, 2012.

\bibitem{BG19}
O.~Burkovska and M.~Gunzburger.
\newblock Affine approximation of parametrized kernels and model order
  reduction for nonlocal and fractional laplace models.
\newblock {\em arXiv preprint arXiv:1901.06748 [math.NA]}, 2019.

\bibitem{chen2006speculative}
W.~Chen.
\newblock A speculative study of 2/3-order fractional {L}aplacian modeling of
  turbulence: Some thoughts and conjectures.
\newblock {\em Chaos}, 16(2):023126, 2006.

\bibitem{CDD18}
A.~Cohen, W.~Dahmen, and R.~DeVore.
\newblock Reduced basis greedy selection using random training sets.
\newblock {\em arXiv preprint arXiv:1810.09344 [math.NA]}, 2018.

\bibitem{CD15}
A.~Cohen and R.~DeVore.
\newblock Approximation of high-dimensional parametric {PDE}s.
\newblock {\em Acta Numer.}, 24(1):1--159, 2015.

\bibitem{DS19}
T.~Danczul and J.~Sch{\"o}berl.
\newblock A reduced basis method for fractional diffusion operators i.
\newblock {\em arXiv preprint arXiv:1904.05599 [math.NA]}, 2019.

\bibitem{DPW13}
R.~DeVore, G.~Petrova, and P.~Wojtaszczyk.
\newblock Greedy algorithms for reduced bases in banach spaces.
\newblock {\em Constr. Approx.}, 37(3):455--466, 2013.

\bibitem{DACCN19}
H.~Dinh, H.~Antil, Y.~Chen, E.~Cherkaev, and A.~Narayan.
\newblock Model reduction for fractional elliptic problems using {K}ato's
  formula.
\newblock {\em arXiv preprint arXiv:1904.09332 [math.NA]}, 2019.

\bibitem{EPR10}
J.L. Eftang, A.T. Patera, and E.M. R{\o}nquist.
\newblock An ``$hp$'' certified reduced basis method for parametrized elliptic
  partial differential equations.
\newblock {\em SIAM J. Sci. Comput.}, 32(6):3170--3200, 2010.

\bibitem{gatto2015numerical}
P.~Gatto and J.S. Hesthaven.
\newblock Numerical approximation of the fractional {L}aplacian via $hp$-finite
  elements, with an application to image denoising.
\newblock {\em J. Sci. Comput.}, 65(1):249--270, 2015.

\bibitem{gonvcar1969zolotarev}
A.A. Gon{\v{c}}ar.
\newblock Zolotarev problems connected with rational functions.
\newblock {\em Math. USSR-Sb.}, 7(4):623--635, 1969.

\bibitem{ha2010studies}
Y.D. Ha and F.~Bobaru.
\newblock Studies of dynamic crack propagation and crack branching with
  peridynamics.
\newblock {\em Int. J. Fract.}, 162(1-2):229--244, 2010.

\bibitem{lischke2018fractional}
A.~Lischke, G.~Pang, M.~Gulian, F.~Song, C.~Glusa, X.~Zheng, Z.~Mao, W.~Cai,
  M.M. Meerschaert, M.~Ainsworth, and G.E. Karniadakis.
\newblock What is the fractional {L}aplacian?
\newblock {\em arXiv preprint arXiv:1801.09767 [math.NA]}, 2018.

\bibitem{MPT02}
Y.~Maday, A.T. Patera, and G.~Turinici.
\newblock Global a priori convergence theory for reduced-basis approximations
  of single-parameter symmetric coercive elliptic partial differential
  equations.
\newblock {\em C.R. Math.}, 335(3):289--294, 2002.

\bibitem{maday2002priori}
Y.~Maday, A.T. Patera, and G.~Turinici.
\newblock A priori convergence theory for reduced-basis approximations of
  single-parameter elliptic partial differential equations.
\newblock {\em J. Sci. Comput.}, 17(1-4):437--446, 2002.

\bibitem{NOS15}
R.~H. Nochetto, E.~Ot{\'a}rola, and A.J. Salgado.
\newblock A {PDE} approach to fractional diffusion in general domains: A priori
  error analysis.
\newblock {\em Found. Comput. Math.}, 15(3):733--791, 2015.

\bibitem{PR06}
A.T. Patera and G.~Rozza.
\newblock {\em A Posteriori Error Estimation for Parametrized Partial
  Differential Equations}.
\newblock Version 1.0, Copyright MIT 2006-2007, to appear in (tentative rubric)
  MIT Pappalardo Graduate Monographs in Mechanical Engineering, 2007.

\bibitem{QMN2016}
A.~Quarteroni, A.~Manzoni, and F.~Negri.
\newblock {\em Reduced Basis Methods for Partial Differential Equa-tions. An
  Introduction}, volume~92 of {\em La Matematica per il 3+2}.
\newblock Springer, 2016.
\newblock Unitext.

\bibitem{silling2000}
S.A. Silling.
\newblock Reformulation of elasticity theory for discontinuities and long-range
  forces.
\newblock {\em J. Mech. Phys. Solids}, 48(1):175--209, 2000.

\bibitem{teodor2014fractional}
M.A. Teodor and S.~Pilipovic.
\newblock Fractional calculus with applications in mechanics: Vibrations and
  diffusion processes.
\newblock {\em ISTE Ltd and John Wiley and Sons Inc, Hoboken}, 2014.

\bibitem{MR1328645}
H.~Triebel.
\newblock {\em Interpolation theory, function spaces, differential operators}.
\newblock Johann Ambrosius Barth, Heidelberg, second edition, 1995.

\bibitem{MR1336382}
K.~Yosida.
\newblock {\em Functional analysis}.
\newblock Classics in Mathematics. Springer-Verlag, Berlin, 1995.
\newblock Reprint of the sixth (1980) edition.

\end{thebibliography}

\end{document}